\documentclass[12pt,english]{elsarticle}
\usepackage{bm}
\usepackage{amsmath}
\usepackage{amsthm}
\usepackage{amssymb}

\makeatletter
\numberwithin{equation}{section}
\numberwithin{figure}{section}
\theoremstyle{plain}
\newtheorem{thm}{\protect\theoremname}[section]
\theoremstyle{plain}
\newtheorem{lem}[thm]{\protect\lemmaname}
\theoremstyle{definition}
\newtheorem{example}[thm]{\protect\examplename}
\theoremstyle{definition}
\newtheorem{assumption}[thm]{\protect\assumptionname}
\theoremstyle{definition}
\newtheorem{defn}[thm]{\protect\definitionname}
\theoremstyle{plain}
\newtheorem{prop}[thm]{\protect\propositionname}
\theoremstyle{plain}
\newtheorem{cor}[thm]{\protect\corollaryname}

\usepackage[a4paper]{geometry}
\geometry{scale={0.75,0.80}}
\usepackage{amsfonts}
\usepackage{amssymb}

\usepackage{color}
\definecolor{red}{rgb}{1,0,0} 
\definecolor{green}{rgb}{0,1,0} 
\definecolor{blue}{rgb}{0,0,1} 
\definecolor{darkblue}{rgb}{0,0,0.6}
\definecolor{darkred}{rgb}{0.6,0,0}

\usepackage[colorlinks,
            linkcolor=blue,
            anchorcolor=darkblue,
            citecolor=darkblue
           ]{hyperref}

\newcommand{\eps}{\varepsilon}  
  
\newcommand{\vf}{\varphi}

\newcommand{\md}{\mathrm{d}}   
\newcommand{\vd}{\,\md}
\newcommand{\me}{\mathrm{e}}   

\newcommand{\pd}{\partial}   


\newcommand{\R}{\mathbb{R}}   

\newcommand{\Dom}{G}

\newcommand{\PS}{\Omega}  
\newcommand{\E}{\mathbf{E}}  
\newcommand{\Prob}{\mathbf{P}}  
\newcommand{\Filt}{\mathcal{F}}  
\newcommand{\Pred}{\mathcal{P}}  
\newcommand{\BM}{w}  

\newcommand{\cm}{\varpi}  
\newcommand{\lap}{\Delta}

\newcommand{\bW}{\mathbb{W}}
\newcommand{\bL}{\mathbb{L}}

\makeatother

\usepackage{babel}
\providecommand{\assumptionname}{Assumption}
\providecommand{\corollaryname}{Corollary}
\providecommand{\definitionname}{Definition}
\providecommand{\examplename}{Example}
\providecommand{\lemmaname}{Lemma}
\providecommand{\propositionname}{Proposition}
\providecommand{\theoremname}{Theorem}

\begin{document}

\begin{frontmatter}{}

\title{$W^{2,p}$-solutions of parabolic SPDEs in general domains}

\author{Kai Du}

\address{Shanghai Center for Mathematical Sciences, Fudan University, Shanghai
200433, China}

\ead{kdu@fudan.edu.cn}
\begin{abstract}
The Dirichlet problem for a class of stochastic partial differential
equations is studied in Sobolev spaces. The existence and uniqueness
result is proved under certain compatibility conditions that ensure
the finiteness of $L^{p}(\Omega\times(0,T),W^{2,p}(G))$-norms of
solutions. The H\"older continuity of solutions and their derivatives
is also obtained by embedding.
\end{abstract}
\begin{keyword}
stochastic partial differential equations, Dirichlet problem, $L^{p}$
estimates, compatibility conditions, unbounded domains
\end{keyword}

\end{frontmatter}{}

\section{Introduction}

Given a domain $G\subset\R^{n}$ and a sequence of independent Wiener
processes $\BM^{k}$, let us consider the following stochastic partial
differential equation (SPDE)
\begin{equation}
\md u=(a^{ij}u_{x^{i}x^{j}}+b^{i}u_{x^{i}}+cu+f)\vd t+(\sigma^{ik}u_{x^{i}}+\nu^{k}u+g^{k})\vd\BM_{t}^{k}\label{eq:first}
\end{equation}
with $(t,x,\omega)\in(0,T]\times\Dom\times\PS$, where the leading
coefficients $a^{ij}(t,x,\omega)$ and $\sigma^{ik}(t,x,\omega)$
satisfy the \emph{strong parabolicity condition}: there are positive
numbers $\kappa$ and $K$ such that
\begin{equation}
\kappa|\xi|^{2}+\sigma^{ik}\sigma^{jk}\xi^{i}\xi^{j}\le2a^{ij}\xi^{i}\xi^{j}\le K|\xi|^{2}\quad\text{for all }\xi\in\R^{n}\text{ and }(t,x,\omega).\label{eq:parabolic}
\end{equation}
Einstein summation convention is used in this paper with $i,j=1,\dots,n$
and $k=1,2,\dots$. Such equations arise in many applications such
as nonlinear filtering, statistical physics, and so on (see \citet{da2014stochastic}
and references therein). The countable sum of stochastic integrals
in (\ref{eq:first}) lets it include equations driven by cylindrical
white noise (cf. \citet{walsh1986introduction,krylov1999analytic}).

The main goal of this paper is to obtain the solvability of parabolic
SPDEs in the space $L^{p}(\Omega\times(0,T),W^{2,p}(G))$ with natural
structural conditions, where $W^{2,p}(G)$ is a standard Sobolev space
with $p\ge2$. To explain our interest in this problem, let us recall
some well-known results from SPDE theory in this aspect. Under the
framework of Hilbert spaces $H^m(G)=W^{m,2}(G)$, \citet{krylov1981stochastic}
proved the existence and uniqueness of weak solutions for a large
class of parabolic SPDEs, and then they proved the smoothness of solutions
when $G=\R^{n}$. So far, the theory for the Cauchy problem is rather
complete and satisfactory: a comprehensive $L^{p}$-theory of parabolic
SPDEs in the whole space was developed by \citet{krylov1996l_p} in
Bessel potential spaces $H^{s,p}(\R^{n})$ (equivalent to $W^{s,p}(\R^{n})$
when $s$ is a natural number), and a solvability theory in H\"older
classes was constructed by \citet{mikulevicius2000cauchy,du2015cauchy}.
As far as general domains $G$ are concerned, one of the greatest
difficulties is how to handle the ``bad'' behaviour of derivatives
of solutions near the boundary. Indeed, unless certain compatibility
conditions are fulfilled, the derivatives of the solutions may blow
up near the boundary even in the one-dimensional case. As an example,
let us take a look at the following finding from \citet[Theorem~5.3]{Krylov2003Brownian}.
\begin{lem}[\citet{Krylov2003Brownian}]
There exists a $\lambda_{0}>0$ such that if $\lambda\in(0,\lambda_{0})$
and the function $u$ with $u(t,0)=0$ for all $t$ and $u(0,\cdot)\in C_{0}^{\infty}(0,\infty)$
satisfies the equation
\begin{align*}
\md u & =u_{xx}\vd t+\sqrt{2-\lambda}\,u_{x}\vd W_{t}\quad\text{on }(0,\infty)^{2},
\end{align*}
then there exists a dense subset $S\subset(0,\infty)$ such that for
all $s\in S$ and $\alpha>\me^{-\frac{1}{2\lambda}}$, it holds almost
surely (a.s.) that $\lim_{x\downarrow0}x^{-\alpha}u(s,x)=\infty$;
consequently, $\limsup_{x\downarrow0}|u_{x}(s,x)|=\infty$.
\end{lem}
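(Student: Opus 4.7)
The plan is to use a stochastic change of variables that removes the noise from the equation, reducing the SPDE to a deterministic heat equation on a randomly moving domain, and then to exploit the roughness of this Brownian boundary to force the claimed pathological behaviour of $u$ near $x=0$.

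\textbf{Step 1 (Stochastic flow transformation).} Set $\beta:=\sqrt{2-\la}$ and define $v(t,y):=u(t,y-\beta W_t)$. A direct application of the It\^o--Wentzell formula shows that $v$ satisfies the deterministic heat equation $\md v = \tfrac{\la}{2}v_{yy}\vd t$ on the random parabolic region $\{(t,y):0<t\le T,\ y>\beta W_t\}$, with zero Dirichlet data on the moving boundary $y=\beta W_t$ and initial profile $\varphi:=u(0,\cdot)\in C_0^{\infty}(0,\infty)$. Since $u(s,x)=v(s,x+\beta W_s)$, the behaviour of $u(s,\cdot)$ as $x\downarrow 0$ is exactly that of $v(s,\cdot)$ immediately to the right of the moving boundary $y=\beta W_s$.

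\textbf{Step 2 (Small diffusivity versus rough boundary).} Because $\la$ is small, the diffusivity $\la/2$ of the transformed equation is much weaker than the typical fluctuations of the boundary (scale $\sqrt{\la t}$ versus $\sqrt{t}$). I would then use the Feynman--Kac representation $v(s,y)=\E\bigl[\varphi(y+\sqrt{\la}\,B_s)\mathbf{1}_{\{y+\sqrt{\la}\,B_r>\beta W_{s-r}\ \text{for all }r\in[0,s]\}}\bigr]$, with $B$ a Brownian motion independent of $W$, to obtain sharp two-sided bounds on $v$ close to the boundary. The governing quantitative input is a Gaussian tail of the form $\exp(-x^{2}/(2\la s))$ for the probability that a Brownian motion of variance $\la$ avoids the rough boundary starting at distance $x$; this is where the threshold $\alpha_{0}=\me^{-1/(2\la)}$ must emerge, since $\exp(-x^{2}/(2\la s))\gg x^{\alpha}$ as $x\downarrow 0$ precisely when $\alpha$ exceeds this value.

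\textbf{Step 3 (Density of $S$).} To obtain the lower bound $\lim_{x\downarrow 0}x^{-\alpha}u(s,x)=\infty$ for a dense set of times $s$, I would specialise to times at which $W$ exhibits a near-record configuration so that the boundary has not yet had time to sweep through the bulk of $\varphi$, forcing $v$ to remain of order one at a distance smaller than any power of the relevant diffusive scale from the boundary. Local recurrence of Brownian motion together with the strong Markov property and Brownian scaling then guarantee that every non-empty open interval of times contains such a configuration almost surely, yielding the density of $S$ in $(0,\infty)$.

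\textbf{Main obstacle.} The delicate step is Step 2: extracting the precise threshold $\me^{-1/(2\la)}$ rather than a qualitative ``positive exponent'' statement requires sharp heat-kernel estimates on the Brownian-boundary domain together with a carefully matched barrier argument, so that the Gaussian tail and the polynomial $x^{\alpha}$ balance in exactly the right regime. The smallness of $\la_{0}$ is what makes this balance favourable and is the source of the hypothesis $\la\in(0,\la_{0})$.
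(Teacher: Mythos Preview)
The paper does not prove this lemma at all: it is quoted verbatim from \textsc{Krylov} (2003), Theorem~5.3, and serves only as motivation in the introduction. There is therefore no ``paper's own proof'' to compare against; any assessment has to be of your sketch on its own merits.

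Your Step~1 is the right starting point and is indeed the device Krylov uses: the random shift $y=x+\beta W_t$ turns the SPDE into the deterministic heat equation $\partial_t v=\tfrac{\la}{2}v_{yy}$ on the domain with moving Brownian boundary $y>\beta W_t$, and the boundary behaviour of $u(s,\cdot)$ at $x=0$ becomes that of $v(s,\cdot)$ at the rough boundary point $\beta W_s$. Step~3 is in the right spirit as well: the dense set $S$ arises from times at which $W$ is at (or very near) a running extremum, and such times are dense almost surely.

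Step~2, however, contains a concrete error. You write that ``$\exp(-x^{2}/(2\la s))\gg x^{\alpha}$ as $x\downarrow 0$ precisely when $\alpha$ exceeds $\me^{-1/(2\la)}$''. This is false: as $x\downarrow 0$ the Gaussian factor tends to $1$, so it dominates $x^{\alpha}$ for \emph{every} $\alpha>0$, and no threshold can emerge from that comparison. The true source of the exponent $\me^{-1/(2\la)}$ is more delicate: one must compare the diffusive length scale $\sqrt{\la\,\de}$ of the transformed heat equation over a short time window $\de$ with the size $\sqrt{\de}$ of a Brownian excursion of the boundary in that window, and then optimise over $\de$ together with a small-ball (not large-deviation) estimate for the probability that the auxiliary Brownian motion in the Feynman--Kac representation avoids the boundary. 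That optimisation is what produces an exponent of the form $\exp(-c/\la)$; it does not come from a Gaussian tail in $x$. As written, your outline has the right architecture but the wrong mechanism at the crucial quantitative step, so it would not recover the stated threshold without substantial additional work along the lines of Krylov's original argument.
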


\citet{flandoli1990dirichlet} proved the existence and uniqueness
of solutions of parabolic SPDEs in the Hilbert space $H^{2m+1}(G)$
under a long series of compatibility conditions (see Theorem 4.1 there).
\citet{Brzeniak1995Stochastic} solved the equations in the Besov
space $B_{p,2}^{1}(G)$ (whose elements have first-order weak derivatives) requiring
$\sigma$ to be sufficiently small. Both of them used semigroup method,
and the leading coefficients of their equations were deterministic.
Applying PDE techniques, \citet{krylov1994aw} developed a $W^{m,2}$-theory
of linear SPDEs in general smooth domains, where the equations could
have random coefficients; instead of the compatibility conditions,
he introduced Sobolev spaces with weights to control the blow-up of
derivatives of solutions near the boundary. This idea was adopted
to develop a weighted $L^{p}$-theory for parabolic SPDEs in general
domains, see \citet{krylov1994aw,krylov1999sobolev,kim2004p,kim2004stochastic}
among others. 
For more aspects of regularity theory for quasilinear SPDEs in domains, we refer to
\citet{denis2005p,zhang2006lp,van2012stochastic,van2012maximal,debussche2015re,Gerencs2017Boundary} and the references therein.

By relaxing the requirement on derivatives of solutions, the weighted
$L^{p}$-theory of SPDEs is successful in dealing with equations under
very general assumptions on the coefficients. Nevertheless, it is
still interesting enough to ask under which circumstances the solutions
of SPDEs lie in the normal Sobolev spaces, especially the space $W^{2,p}$
in which the solutions found are called \emph{strong solutions} in
classical PDE theory (cf. \citet{lieberman1996second}). This question
seems not to be answered by the weighted $L^{p}$-theory of SPDEs.
To find natural conditions, let us start with two examples as follows,
which show that, if there is no restriction on the boundary values
of coefficient $\sigma$, the second-order derivatives need not be
square integrable. 
\begin{example}
Let $u(t,x)$ with $t\in(0,\infty)$ and $x\in G=(0,1)$ be a solution
of the equation:
\begin{align*}
\md u(t,x) & =(u_{xx}(t,x)+f(t))\vd t+\sigma(x)u_{x}(t,x)\vd W_{t},\\
u(0,x) & =u(t,0)=u(t,1)=0,
\end{align*}
where $W$ is a one-dimensional Wiener process, $\sigma\in C^{2}(\bar{G})$
with $\sup_{G}|\sigma|<2$, and $f\in L^{2}(0,\infty)$ are not identically
zero. From $L^{2}$-theory of SPDEs (cf. \citet{krylov1981stochastic}),
the equation has a unique (nonzero) solution $u\in L^{2}(\Omega\times(0,\infty),H_{0}^{1}(G))$.
However, if $\sigma(0)\sigma(1)\neq0$, we can see that $u_{xx}\notin L^{2}(\Omega\times(0,T)\times G)$
for any $T>0$. Indeed, if $u_{xx}\in L^{2}(\Omega\times(0,T)\times G)$
for some $T>0$, then by embedding $u_{x}$ is continuous on $[0,T]\times\bar{G}$,
and according to the boundary condition, we have $\sigma(0)u_{x}(t,0)=\sigma(1)u_{x}(t,1)=0$.
Therefore, $v:=u_{x}\in H_{0}^{1}(G)$ satisfies (in the sense of
distribution)
\[
\md v=v_{xx}\md t+(\sigma v_{x}+\sigma_{x}v)\vd W_{t},\quad v(0,x)=v(t,0)=v(t,1)=0,
\]
which implies $u_{x}=v=0$, and furthermore, $u=0$ by the boundary
condition, yielding a contradiction. Thus, $u\notin L^{2}(\Omega\times(0,T),H^{2}(G))$
as long as $\sigma(0)\sigma(1)\neq0$.
\end{example}

\begin{example}
With $\sigma\in(-2,2)\backslash\{0\}$, $T>0$ and $G=(0,1)$ the
following equation
\[
\md u=(u_{xx}+x/\sigma)\vd t+(\sigma u_{x}-t)\vd W_{t},\quad u(0,x)=u(t,0)=u(t,1)=0
\]
has a unique solution $u\in L^{2}(\Omega\times(0,T),H_{0}^{1}(G))$
from $L^{2}$-theory of SPDEs. Suppose $u_{xx}\in L^{2}(\Omega\times(0,T),L^{2}(G))$.
Then by a similar argument as in the previous example, we have that
$v=u_{x}\in H^{1}(G)$ satisfies 
\[
\md v=(v_{xx}+1/\sigma)\vd t+\sigma v_{x}\vd W_{t},\quad v(0,x)=0,\quad v(t,0)=v(t,1)=t/\sigma.
\]
Solving this equation we have $u_{x}(t,x)=v(t,x)=t/\sigma$, which
is impossible given $u(t,0)=u(t,1)=0$. Therefore, $u\notin L^{2}(\Omega\times(0,T),H^{2}(G))$.
\end{example}

From the above, certain compatibility condition on the coefficient
$\sigma$ must be involved to ensure the second-order derivatives
of solutions lie in $L^{p}(\Omega\times(0,T),W^{2,p}(G))$. This issue
was first addressed by \citet{flandoli1990dirichlet}, where $p=2$
and the coefficients of equations depended only on $x$. For $p>2$
there seems be no result in the literature. In this note we propose
the following condition.
\begin{assumption}
\label{assu:compatib}The vectors $\sigma^{\cdot k}=(\sigma^{1k},\dots,\sigma^{nk})$
restricted on $\pd\Dom$ are tangent to $\pd\Dom$, namely,
\begin{equation}
\bm{n}(x)\cdot\sigma^{\cdot k}(t,x,\omega)=0,\quad k=1,2,\dots\label{eq:compatib}
\end{equation}
for all $x\in\partial G$ and all $(t,\omega)$, where $\bm{n}(x)$
is a unit normal vector of $\pd\Dom$ at $x$.
\end{assumption}

When considering zero boundary conditions, this assumption is quite
necessary for our goal according to our examples, and technically,
it gives the least condition on $\sigma$ to ensure that $\sigma^{ik}u_{x^{i}}$
vanishes on the boundary for all $u\in W^{2,p}(G)\cap W_{0}^{1,p}(G)$
and all $k$. Meanwhile, the free term $g$ must equal zero on the
boundary consequently, otherwise the second-order derivatives of solutions
of Eq. (\ref{eq:first}) may still blow up near the boundary, as was
illustrated in \citet[Example~1.2]{krylov1994aw}.

Assumption \ref{assu:compatib} and the boundary value restriction
of $g$ are all we need additionally to achieve our goal. Indeed,
the main result of this paper, Theorem \ref{thm:main} below, yields
that, under Assumption \ref{assu:compatib} along with other standard
conditions on the coefficients and on the domain, SPDE (\ref{eq:first})
with zero initial-boundary condition has a unique solution $u$ in
the space $L^{p}(\Omega\times(0,T),\Pred,W^{2,p}(G))$ for any given
$f\in L^{p}(\Omega\times(0,T),\Pred,L^{p}(G))$ and $g\in L^{p}(\Omega\times(0,T),\Pred,W_{0}^{1,p}(G;\ell^{2}))$,
where $\Pred$ is the predictable $\sigma$-field. The requirement
on the boundary value of $g$ is attracted into the space. By embedding
the solution and its derivatives are globally H\"older
continuous as long as $p>n+2$. 

It is worth noting that Assumption \ref{assu:compatib} has local
impact on the regularity of solutions; in other words, if (\ref{eq:compatib})
is satisfied only on a portion of $\partial G$, then the solutions
possess $W^{2,p}$-regularity and continuity near this portion. This
property is elaborated in Theorem \ref{thm:local} in the next section.

This paper is organized as follows. In the next section the main results
are stated after introducing some notation and assumptions. Section
3 is devoted to the proof of Theorem \ref{thm:main}, consisting of
four subsections: in Subsection 3.1 we obtain the existence, uniqueness
and estimates of the solution of a model equation in the half space;
in Subsection 3.2 we derive a priori estimates for general equations
in $\mathcal{C}^{2}$ domains; the existence and uniqueness of solutions
in the general case is proved in Subsection 3.3 with the help of the
method of continuity and the Banach fixed-point theorem; and in Subsection
3.4 we prove the continuity of solutions and their derivatives. Theorem
\ref{thm:local} is proved in the final section.

\section{Notation and main results\label{sec:Main-results}}

Let $(\PS,\Filt,\Filt_{t},\Prob)$ be a complete filtered probability
space carrying a sequence of independent Wiener process $\BM^{k}$,
and $\Pred$ the predictable $\sigma$-field generated by $\Filt_{t}$.
Let $\R^{n}$ be an $n$-dimensional Euclidean space of points $x=(x^{1},\dots,x^{n})$,
and
\[
\R_{+}^{n}=\{x=(x^{1},x'):x^{1}>0,\,x'=(x^{2},\dots,x^{n})\in\R^{n-1}\}.
\]
Denote $B_{\rho}(x)=\{y\in\R^{n}:|x-y|<\rho\}$ and $B_{\rho}=B_{\rho}(0)$. Let $G$ be a domain
in $\R^{n}$. The following definition is taken from \citet[Page~165]{Krylov2008Lectures}.
\begin{defn}
\label{def:domain} We write $\Dom\in\mathcal{C}^{2}$ if there are
positive constants $K_{0}$ and $\rho_{0}$ such that for each $z\in\pd\Dom$
there exists a one-to-one map $\psi$ from $B_{\rho_{0}}(z)$ to a
domain $U^{z}\subset\R^{n}$ such that
\begin{enumerate}
\item $\psi(z)=0$ and $U_{+}^{z}:=\psi(B_{\rho_{0}}(z)\cap\Dom)\subset\R_{+}^{n}$,
\item $\psi(B_{\rho_{0}}(z)\cap\pd\Dom)=U^{z}\cap\{y\in\R^{n}:y^{1}=0\}$,
\item $\psi\in C^{2}(\bar{B}_{\rho_{0}}(z))$, $\psi^{-1}\in C^{2}(\bar{U}^{z})$,
and $\|\psi\|_{C^{2}}+\|\psi^{-1}\|_{C^{2}}\le K_{0}$.
\end{enumerate}
We say that the diffeomorphism $\psi$ flattens the boundary near
$z$.
\end{defn}

Fix real numbers $T>0$ and $p\ge2$ in this paper. 

For $m\ge0$ we let $W^{m,p}(G)$ and $W_{0}^{1,p}(G)$ be the usual
Sobolev spaces (cf. \citet{adams2003sobolev}), and $W^{m,p}(G;\ell^{2})$
and $W_{0}^{1,p}(G;\ell^{2})$ the corresponding spaces of $\ell^{2}$-valued
functions. Denote
\[
W_{\circ}^{m,p}(G)=W^{m,p}(G)\cap W_{0}^{1,p}(G),\quad m\ge1,
\]
and $W_{\circ}^{0,p}(\cdot)=W^{0,p}(\cdot)=L^{p}(\cdot)$. Denote
by $W_{{\rm loc}}^{m,p}(G)$ the space of all functions $u$ such
that $u\in W^{m,p}(G')$ for any $G'\subset G$ with $\mathrm{dist}(G',\partial G)>0$.

For random functions, we define 
\begin{align*}
\bW^{m,p}(G,\tau) & =L^{p}(\PS\times(0,\tau),\Pred,W^{m,p}(G)),\quad\bW^{m,p}(G)=\bW^{m,p}(G,T),\\
\bW_{\circ}^{m,p}(G,\tau) & =L^{p}(\PS\times(0,\tau),\Pred,W_{\circ}^{m,p}(G)),\quad\bW_{\circ}^{m,p}(G)=\bW_{\circ}^{m,p}(G,T),
\end{align*}
and analogously, $\bW^{m,p}(G,\tau;\ell^{2})$, $\bW^{m,p}(G;\ell^{2})$,
$\bW_{\circ}^{m,p}(G;\ell^{2}),\bW_{{\rm loc}}^{m,p}(G)$, etc. Denote
$\bL^{p}(\cdot)=\bW^{0,p}(\cdot)=\bW_{\circ}^{0,p}(\cdot)$.

The understanding of solutions of SPDEs is implied in the following
definition of a functional space for solutions (cf. \citet{krylov1999analytic}).
\begin{defn}
\label{def:W2p}For a positive integer $m$, by $\mathcal{W}_{\circ}^{m,p}(G)$
we denote the space of all functions $u\in\bW_{\circ}^{m,p}(G)$ such
that 
\[
u(0,\cdot)\in L^{p}(\PS,\Filt_{0},W_{\circ}^{m-2/p,p}(\Dom))
\]
and for some $u_{{\rm D}}\in\bW^{m-2,p}(G)$ and $u_{{\rm S}}\in\bW_{\circ}^{m-1,p}(G;\ell^{2})$,
the equation $\md u=u_{{\rm D}}\vd t+u_{{\rm S}}^{k}\vd\BM_{t}^{k}$
holds in the sense of distributions, namely, for all $\phi\in C_{0}^{\infty}(G)$,
\[
(u(t,\cdot),\phi)=(u(0,\cdot),\phi)+\int_{0}^{t}(u_{{\rm D}}(s,\cdot),\phi)\vd s+\int_{0}^{t}(u_{{\rm S}}^{k}(s,\cdot),\phi)\vd\BM_{s}^{k}
\]
for all $t\le T$ with probability $1$.
\end{defn}

Now we consider the following semilinear equation
\begin{equation}
\md u=(a^{ij}u_{x^{i}x^{j}}+f(t,x,u))\vd t+(\sigma^{ik}u_{x^{i}}+g^{k}(t,x,u))\vd\BM_{t}^{k}\label{eq:main}
\end{equation}
with the initial-boundary condition
\begin{equation}
\Big\{\!\begin{array}{ll}
u(t,x)=0, & x\in\pd\Dom,\ t\ge0;\\
u(0,x)=u_{0}(x), & x\in\Dom.
\end{array}\label{eq:bdcondition}
\end{equation}
The following conditions on the given data are quite standard (cf.
\citet{krylov1999analytic}).
\begin{assumption}
\label{assu:continuity}The functions $a^{ij}=a^{ji}$ and $\sigma^{ik}$
are real valued and $\Pred\times\mathcal{B}(\Dom)$-measurable and
satisfy the strong parabolicity condition (\ref{eq:parabolic}), and
there are a number $L>0$ and a continuous and increasing function
$\cm(\cdot)$ with $\cm(0)=0$ such that
\[
|a^{ij}(t,x)-a^{ij}(t,y)|\le\cm(|x-y|),\quad\|\sigma^{i\cdot}(t,x)-\sigma^{i\cdot}(t,y)\|_{\ell^{2}}\le L|x-y|,
\]
 for all $(t,\omega)$, all $x,y\in\bar{\Dom}$, and all $i,j=1,\dots,n$.
\end{assumption}

\begin{assumption}
\label{assu:fg}{(a)} For any $u\in W_{\circ}^{2,p}(\Dom)$,
the functions $f(\cdot,\cdot,u)$ and $g(\cdot,\cdot,u)$ are predictable
as functions taking values in $L^{p}(G)$ and $W_{\circ}^{1,p}(G;\ell^{2})$,
respectively.

{(b)} $f(\cdot,\cdot,0)\in\bL^{p}(\Dom)$ and $g(\cdot,\cdot,0)\in\bW_{\circ}^{1,p}(\Dom;\ell^{2})$.

{(c)} For any $\eps>0$, there is a $K_{\eps}\ge0$ such that
for any $u,v\in W_{\circ}^{2,p}(\Dom)$, $t$, $\omega$, we have
\[
\begin{aligned}\|f(t,\cdot,u)-f(t,\cdot,v)\|_{L^{p}(G)} & +\|g(t,\cdot,u)-g(t,\cdot,v)\|_{W^{1,p}(G;\ell^{2})}\\
 & \le\eps\|u-v\|_{W^{2,p}(G)}+K_{\eps}\|u-v\|_{L^{p}(G)}.
\end{aligned}
\]
\end{assumption}

The main result of this paper is the following theorem.
\begin{thm}
\label{thm:main}Let $\Dom\in\mathcal{C}^{2}$ and Assumptions \ref{assu:compatib},
\ref{assu:continuity} and \ref{assu:fg} be satisfied. Then we have
that

\emph{(i)} for any $u_{0}(\cdot)\in L^{p}(\PS,\Filt_{0},W_{\circ}^{2-2/p,p}(\Dom))$
Dirichlet problem (\ref{eq:main})--(\ref{eq:bdcondition}) admits
a unique solution $u\in\mathcal{W}_{\circ}^{2,p}(G)$;

\emph{(ii)} the solution satisfies the estimate
\begin{equation}
\|u\|_{\bW^{2,p}(G)}^{p}\le C\bigl(\|f(\cdot,\cdot,0)\|_{\bL^{p}(G)}^{p}+\|g(\cdot,\cdot,0)\|_{\bW^{1,p}(G;\ell^{2})}^{p}+\E\|u_{0}\|_{W^{2-2/p,p}(G)}^{p}\bigr),\label{eq:main-est}
\end{equation}
where the constant $C$ depends only on $\kappa,K,n,p,T,K_{0},\rho_{0},L,$
and the functions $\cm(\cdot)$ and $K_{\eps}$;

\emph{(iii)} when $p>\max\{2,(n+2)/2\}$, $u\in L^{p}(\Omega,C^{\alpha/2,\alpha}([0,T]\times\bar{G}))$
for any $\alpha\in(0,\frac{2p-n-2}{2p})$, and when $p>n+2$, $u_{x}\in L^{p}(\Omega,C^{\beta/2,\beta}([0,T]\times\bar{G}))$
for any $\beta\in(0,\frac{p-n-2}{2p})$.
\end{thm}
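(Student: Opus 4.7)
The plan is to follow the four-step strategy sketched in the introduction: first solve a constant-coefficient model problem on the half-space under Assumption \ref{assu:compatib}, then localize and flatten to derive an a priori $W^{2,p}$-estimate on a general $\mathcal{C}^{2}$ domain, next obtain existence and uniqueness via the method of continuity together with a contraction argument to handle the semilinear dependence, and finally upgrade to H\"older regularity by parabolic embedding.

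For the half-space model, I would treat \eqref{eq:first} with constant leading coefficients $(a^{ij},\sigma^{ik})$, zero lower-order terms, free data $(f,g)$ and vanishing initial-boundary condition on $\R_+^n$. After flattening, Assumption \ref{assu:compatib} becomes $\sigma^{1k}\equiv 0$ on $\{x^{1}=0\}$, and $g^k\in W^{1,p}_{\circ}$ forces $g^k|_{x^1=0}=0$. Together with a preliminary linear change of variables aligning $a^{ij}$ so that $a^{1j}=\tfrac12\de^{1j}$, these tangency facts are exactly what allow odd reflection of $u,f$ and $g^k$ across $\{x^{1}=0\}$ to produce a distributional solution to an SPDE of the same form on all of $\R^{n}$; without $\sigma^{1k}=0$ the reflection would generate a spurious stochastic singularity on the hyperplane. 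The whole-space $L^{p}$-theory of \citet{krylov1996l_p} then yields existence, uniqueness and the $W^{2,p}$-estimate for the extension, which restricts to the required model estimate on $\R_+^n$.

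Next, for $\Dom\in\mathcal{C}^{2}$, I would cover $\bar{G}$ by an interior set and finitely many balls $B_{\rho}(z_{\al})$ of small radius, and introduce a subordinate partition of unity $\{\zeta_{\al}\}$. Each boundary piece is transported to the half-space by the diffeomorphism $\psi$ of Definition \ref{def:domain}; Assumption \ref{assu:compatib} is preserved, while second-order derivatives of $\psi$ generate only lower-order perturbations. Applied to $v_{\al}=\zeta_{\al}u$ with coefficients frozen at $z_{\al}$, the half-space model estimate combined with Assumption \ref{assu:continuity} produces a bound of the form
\[
\sum_{\al}\|v_{\al}\|_{\bW^{2,p}}\le C\bigl(\cm(\rho)+L\rho\bigr)\|u\|_{\bW^{2,p}}+C_{\rho}\bigl(\|u\|_{\bW^{1,p}}+\text{data}\bigr),
\]
where the top-order perturbation comes from the modulus $\cm$ on $a^{ij}$ and the Lipschitz bound $L$ on $\si^{ik}$; choosing $\rho$ small enough to absorb $C(\cm(\rho)+L\rho)\|u\|_{\bW^{2,p}}$ into the left-hand side and combining with an interior Krylov estimate for the interior chart delivers the a priori estimate \eqref{eq:main-est} for the linear problem. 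This localization/freezing step is the main technical obstacle: it is where Assumption \ref{assu:compatib} does its real work, and one must track carefully that curvature contributions from $\pd\Dom$ enter only as lower-order terms that do not destroy the tangency structure when transplanted to the half-space.

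With the linear a priori estimate in hand, the method of continuity in $\la\in[0,1]$ connects the given linear equation to the reference equation $\md u=\lap u\vd t+f\vd t+g^{k}\vd\BM_{t}^{k}$, which is solvable directly by the preceding arguments since $\si\equiv 0$ satisfies Assumption \ref{assu:compatib} trivially; the uniform a priori bound along the path provides solvability of the whole linear family. For the semilinear case I would define $T v$ to be the solution of the linear problem with data $(f(\cdot,\cdot,v),g(\cdot,\cdot,v))$ and apply the Banach fixed-point theorem: Assumption \ref{assu:fg}(c) with $\eps$ small, combined with an exponentially weighted norm $\|\cdot\|_{\bW^{2,p}(G,t)}e^{-\mu t}$ for large $\mu$ (or an iterated Gr\"onwall-in-time argument), renders $T$ a strict contraction on $\mathcal{W}^{2,p}_{\circ}(G)$. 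Nonzero initial data are absorbed by subtracting a deterministic lift solving a parabolic PDE with the prescribed $W^{2-2/p,p}$ initial condition. Part (iii) then follows from a stochastic parabolic embedding in the style of \citet{krylov1999analytic}: for $p>(n+2)/2$ one has $\mathcal{W}^{2,p}_{\circ}(G)\hookrightarrow L^{p}(\PS,C^{\al/2,\al}([0,T]\times\bar G))$ with the stated range of $\al$, and applying the same embedding to the first derivatives, which by (i)--(ii) inherit $\mathcal{W}^{1,p}$-type regularity, produces the H\"older estimate on $u_{x}$ when $p>n+2$.
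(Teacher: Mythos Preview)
Your overall architecture matches the paper's, but the half-space model step has a genuine gap. You propose to reduce to the case $a^{1j}=\tfrac12\de^{1j}$ by a preliminary linear change of variables and then apply odd reflection directly. The difficulty is that in the model problem the leading coefficients are frozen only in $x$; they remain predictable processes $a^{ij}(t,\omega)$ with no assumed regularity in $t$. The shear needed to kill the cross terms $a^{1j}$, $j\ge2$, therefore depends on $(t,\omega)$, and relating $u$ to the transformed function would require an It\^o/It\^o--Wentzell computation in which that change of variables is a semimartingale, which it need not be. Without such a reduction the odd extension of $u$ does not satisfy the same equation on $\R^{n}$, because the mixed second-order terms $a^{1j}u_{x^{1}x^{j}}$ are even in $x^{1}$ and break the symmetry.

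The paper circumvents this with a two-stage decomposition rather than a change of variables. First it solves $\md\hat u=K\lap\hat u\vd t+(\sigma^{ik}\hat u_{x^{i}}+g^{k})\vd\BM^{k}$ on $\R_{+}^{n}$ by odd extension of $g$; since the drift is the Laplacian there are no cross terms and reflection goes through. Second, setting $\tilde f:=f-(K\de^{ij}-a^{ij})\hat u_{x^{i}x^{j}}$, it solves $\md\tilde u=(a^{ij}\tilde u_{x^{i}x^{j}}+\tilde f)\vd t+\sigma^{ik}\tilde u_{x^{i}}\vd\BM^{k}$ via the random translation $\tilde u(t,x)=v(t,x+\xi_{t})$ with $\xi_{t}^{i}=\int_{0}^{t}\sigma^{ik}\vd\BM^{k}$; the hypothesis $\sigma^{1k}\equiv0$ gives $\xi_{t}^{1}\equiv0$, so the translation preserves $\R_{+}^{n}$, and It\^o--Wentzell reduces the problem to a pathwise deterministic parabolic Dirichlet problem for $v$, to which classical $W^{2,p}$ theory applies $\omega$-by-$\omega$. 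One smaller point: your cover by ``finitely many balls'' handles only bounded $G$; the paper allows unbounded $\mathcal{C}^{2}$ domains and uses a countable cover of $\partial G$ with uniformly bounded overlap (Lemma~\ref{lem:cover}) to sum the local estimates.
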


The H\"older space $C^{\alpha/2,\alpha}([0,T]\times\bar{G})$ is
defined in the standard way (cf. \citet{krylov1996lectures}), which
contains all continuous functions $u:[0,T]\times\bar{G}\to\R$ such
that
\[
\|u\|_{C^{\alpha/2,\alpha}([0,T]\times\bar{G})}=\sup_{[0,T]\times\bar{G}}|u|+\sup_{(t,x)\neq(s,y)}\frac{|u(t,x)-u(s,y)|}{|t-s|^{\alpha/2}+|x-y|^{\alpha}}<\infty.
\]

In the literature the domain $G$ was usually assumed to be bounded
(unless it is the whole space or a half space), but here it can be
unbounded, and a detailed argument will be presented in our proof
to address unbounded domains (see Subsection 3.2 below). Moreover,
the above theorem still holds true if the terminal time $T$ is replaced
by any stopping time $\tau\le T$ as in \citet{krylov1999analytic,kim2004p}.
A simple way to do this is to zero extend the functions $f$ and $g$
after time $\tau$ until $T$ and solve the problem in the time period
$[0,T]$.

By interpolation it is easily checked that the linear equation (\ref{eq:first})
fits the assumptions of Theorem (\ref{thm:main}) provided that $f\in\bL^{p}(\Dom)$
and $g\in\bW_{\circ}^{1,p}(\Dom;\ell^{2})$ along with the following
condition.
\begin{assumption}
\label{assu:bcnu}The functions $b^{i}$, $c$, $\nu^{k}$ are real
valued and $\Pred\times\mathcal{B}(\Dom)$-measurable, and $|b^{i}|$,
$|c|$, $\|\nu\|_{\ell^{2}}$, $\|\nu_{x}\|_{\ell^{2}}$ are uniformly
bounded on $[0,T]\times\bar{G}\times\Omega$.
\end{assumption}

Even if the compatibility condition (\ref{eq:compatib}) is satisfied
only on a portion of $\partial G$, it is still possible to obtain
local regularity of solutions near this portion. The main issue here
is that the solution may not lie in $\mathcal{W}_{\circ}^{2,p}(G)$
or even not exist. Fortunately, when $G$ is bounded (or a half space)
and the equation is linear, the Dirichlet problem can be solved in
the weighted Sobolev space $\mathfrak{H}_{p,\theta}^{2}(G)$ by means
of the main results in \citet{kim2004stochastic}; the space $\mathfrak{H}_{p,\theta}^{2}(G)$
that $\mathcal{W}_{\circ}^{2,p}(G)$ can be embedded to was introduced
by \citet{krylov1999sobolev,Lototsky2000Sobolev}, based on delicately
selected weights. With this observation, we formulate the local regularity
result into the following theorem by assuming the existence of solutions
without thorough verification of conditions, and for simplicity but
without loss of essence, we consider the linear equation (\ref{eq:first}). 
\begin{thm}
\label{thm:local}Let $\Gamma$ be an open subset of $\partial G$
and (\ref{eq:compatib}) satisfied at each point $x\in\Gamma$. Let
Assumptions \ref{assu:continuity} and \ref{assu:bcnu} be satisfied
and $|a_{x}^{ij}|$ dominated by the constant $L$. Suppose that $u\in\bL^{p}(G)\cap\bW_{{\rm loc}}^{2,p}(G)$
with $u(0,\cdot)\in L^{p}(\PS,\Filt_{0},W_{\circ}^{2-2/p,p}(\Dom))$
satisfies Eq. (\ref{eq:first}) with $u|_{\partial G}=0$ for given
$f\in\bL^{p}(G)$ and $g\in\bW^{1,p}(G)$ with $g|_{\Gamma}=0$. Then
for any bounded domain $G'\subset G$ with $\mathrm{dist}(G',\partial G\backslash\Gamma)>0$,
we have $u\in\bW^{2,p}(G')$. Moreover, $u\in L^{p}(\Omega,C^{\alpha/2,\alpha}([0,T]\times\bar{G}'))$
for any $\alpha\in(0,\frac{2p-n-2}{2p})$ when $p>\max\{2,(n+2)/2\}$,
and $u_{x}\in L^{p}(\Omega,C^{\beta/2,\beta}([0,T]\times\bar{G'}))$
for any $\beta\in(0,\frac{p-n-2}{2p})$ when $p>n+2$.
\end{thm}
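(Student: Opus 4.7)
My plan is to localize the problem to an auxiliary $\mathcal{C}^{2}$ domain on which Theorem \ref{thm:main} applies, and then use uniqueness of the SPDE in a larger (weighted) Sobolev space to identify the localized truncation of $u$ with the strong solution produced by that theorem.

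First I would construct a bounded $\mathcal{C}^{2}$ domain $D$ with $G'\subset D$ and $\bar D\cap\pd G\subset\Gamma$, together with a cutoff $\zeta\in C^{\infty}(\R^{n})$ such that $\zeta\equiv 1$ on $G'$, $\mathrm{supp}\,\zeta\subset\bar D$, $\zeta$ is constant in a neighborhood of $\pd D$, and crucially $\mathrm{supp}\,\nabla\zeta$ is a compact subset of $G$ (bounded away from $\pd G$). The hypothesis $\mathrm{dist}(G',\pd G\setminus\Gamma)>0$ together with the smoothness of $\Gamma$ (implicit in the pointwise condition (\ref{eq:compatib})) makes this construction standard via the signed distance to $\pd G$ and a bump against it. I would also smoothly modify $\sigma^{ik}$ outside $\mathrm{supp}\,\zeta$ to some $\tilde\sigma^{ik}$ that is tangent to the \emph{entire} boundary $\pd D$: on $\pd D\cap\Gamma$ the original $\sigma$ already satisfies (\ref{eq:compatib}), and on $\pd D\setminus\Gamma\subset G\setminus\mathrm{supp}\,\zeta$ such a modification does not affect the equation for $v:=\zeta u$.

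Next, a direct application of the product rule gives, on $D$,
\[
\md v = \bigl(a^{ij}v_{x^{i}x^{j}}+F\bigr)\vd t + \bigl(\tilde\sigma^{ik}v_{x^{i}}+\nu^{k}v+H^{k}\bigr)\vd\BM^{k}_{t},\qquad v|_{\pd D}=0,\quad v(0,\cdot)=\zeta u_{0},
\]
with $F$ and $H^{k}$ explicit in $f,g,u,u_{x},b,c,\nu,\zeta$ and derivatives of $\zeta$. Using $u\in\bL^{p}(G)\cap\bW_{{\rm loc}}^{2,p}(G)$ and the support properties of $\zeta$, one verifies $F\in\bL^{p}(D)$ and $H\in\bW^{1,p}(D;\ell^{2})$. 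The key point is $H^{k}|_{\pd D}=0$: the term $\zeta g^{k}$ vanishes on $\pd D\cap\Gamma$ (because $g|_{\Gamma}=0$) and on the rest of $\pd D$ (because $\zeta=0$ there), while the terms carrying $\nabla\zeta$ vanish on all of $\pd D$ since $\mathrm{supp}\,\nabla\zeta\Subset G$ and $\zeta$ is constant near $\pd D$. Moreover $\zeta u_{0}\in L^{p}(\PS,\Filt_{0},W_{\circ}^{2-2/p,p}(D))$ by the trace properties of $u_{0}$ and $\zeta$.

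Theorem \ref{thm:main} applied to this auxiliary equation on $D$ then produces a unique $\tilde v\in\mathcal{W}_{\circ}^{2,p}(D)$ with the corresponding $W^{2,p}$ estimate and, when $p$ is large enough, the Hölder regularity of $\tilde v$ and $\tilde v_{x}$. The function $v=\zeta u$, extended by zero outside $\mathrm{supp}\,\zeta$, satisfies the same equation on $D$ in the sense of distributions with $v|_{\pd D}=0$. To conclude $v=\tilde v$ I would invoke uniqueness of the Dirichlet problem in the weighted Sobolev space $\mathfrak{H}_{p,\theta}^{2}(D)$ of \citet{kim2004stochastic,krylov1999sobolev}, into which both $\mathcal{W}_{\circ}^{2,p}(D)$ and (by virtue of $v\in\bL^{p}(D)\cap\bW_{{\rm loc}}^{2,p}(D)$ with zero trace) our $v$ embed. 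This yields $u=v\in\bW^{2,p}(G')$, and the claimed Hölder regularity on $G'$ follows by restriction from that of $\tilde v$ on $D$. The main obstacle is precisely this identification step: $\zeta u$ is only known a priori to sit in $\bL^{p}\cap\bW_{{\rm loc}}^{2,p}$, so matching it to the strong solution furnished by Theorem \ref{thm:main} forces one to rely on the weighted $L^{p}$-theory. The other steps — building $D$, extending $\sigma$, and verifying that $F$ and $H$ meet the hypotheses of Theorem \ref{thm:main} — are technical but routine, needing only care about the two distinct portions of $\pd D$.
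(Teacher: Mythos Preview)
Your route is genuinely different from the paper's. The paper does not build an auxiliary $\mathcal{C}^2$ domain $D$ nor modify $\sigma$; instead it straightens the boundary near each $z\in\Gamma\cap\overline{G'}$, reducing everything to the model case $G=\R_+^n$, $\Gamma=\partial\R_+^n\cap B_{2\eps}$, $G'=B_\eps$. More importantly, the paper proceeds by a \emph{two-step bootstrap}: it first rewrites the equation for $\zeta u$ in \emph{divergence form} and invokes Proposition~\ref{prop:W1p}---the $W^{1,p}$ solvability result stated specifically for this purpose, and the reason the extra hypothesis $a^{ij}\in C^{0,1}$ is imposed---to obtain $u\in\bW^{1,p}$ on $B_{3\eps/2}\cap\R_+^n$; only then, with $u_x$ controlled up to $\Gamma$, does it take a smaller cutoff $\tilde\zeta$ and apply Theorem~\ref{thm:main} to reach $\bW^{2,p}$. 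Your device of arranging $\mathrm{supp}\,\nabla\zeta\Subset G$ cleverly bypasses that intermediate step, since every occurrence of $u_x$ in your $F$ and in $H_x$ sits on a compact subset of $G$ where the assumed interior regularity already suffices. In particular your argument never uses $a^{ij}\in C^{0,1}$, which is an interesting observation.

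The cost is the identification step, and here there is a real gap. Your claim that $v=\zeta u$ lies in $\mathfrak{H}_{p,\theta}^2(D)$ does not follow from $v\in\bL^p(D)\cap\bW_{{\rm loc}}^{2,p}(D)$ with zero trace: the weighted space demands weighted integrability of $v_x$ and $v_{xx}$ up to $\partial D\cap\Gamma$, and nothing in the hypotheses provides that (the paper itself remarks after Theorem~\ref{thm:local} that even the weaker inclusion $\mathfrak{H}_{p,\theta}^2\subset\bW^{1,p}$ can fail). So as written, you cannot place $v$ and $\tilde v$ in a common space where uniqueness is known. The paper's two-step route sidesteps this by keeping the entire argument inside the $\mathcal{W}_\circ^{m,p}$ hierarchy developed in Section~3, at the price of the auxiliary divergence-form lemma. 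Your strategy trades that internal lemma for an external uniqueness input, and the latter needs substantially more justification than you give.
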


In the above theorem the assumption that the solution lies in $\bL^{p}(G)\cap\bW_{{\rm loc}}^{2,p}(G)$
is not restrictive: on the one hand, a function in the space $\mathfrak{H}_{p,\theta}^{2}(G)$
naturally belongs to $\bW_{{\rm loc}}^{2,p}(G)$; on the other hand,
the property $u\in\bL^{p}(G)$ can be derived from the other assumptions
of the theorem with the help of It\^o's formula, at least when $G$
is bounded. Requiring $a^{ij}(t,\cdot)\in C^{0,1}(G)$ allows us to
write the equation into the divergence form that helps us prove $u\in \bW^{1,p}(G')$
as an important intermediate step. We remark that $u\in\mathfrak{H}_{p,\theta}^{2}(G)$
does not always imply $u\in\bW^{1,p}(G)$ (cf. \citet{kim2004p}).

\section{Proof of Theorem \ref{thm:main}}

\subsection{Model equations in a half space}

Let $G=\R_{+}^{n}$ in this subsection. In the first step we consider
the equations on $[0,T]\times\R_{+}^{n}$ with coefficients independent
of $x$. 
\begin{prop}
\label{prop:half-constant}Let $a^{ij}=a^{ij}(t)$ and $\sigma^{ik}=\sigma^{ik}(t)$
be predictable processes and satisfy (\ref{eq:parabolic}). Assume
that
\[
\sigma^{1\cdot}=(\sigma^{11},\sigma^{12},\dots)\equiv0,\quad\forall(t,\omega)\in[0,T]\times\PS.
\]
Consider the Dirichlet problem
\begin{equation}
\bigg\{\begin{aligned} & \md u=(a^{ij}u_{x^{i}x^{j}}+f)\vd t+(\sigma^{ik}u_{x^{i}}+g^{k})\vd\BM_{t}^{k},\\
 & u|_{t=0}=0,\quad u|_{x^{1}=0}=0.
\end{aligned}
\label{eq:half-space-1}
\end{equation}
Then, \emph{(i)} for $f\in\bL^{p}(\R_{+}^{n})$ and $g\in\bW_{\circ}^{1,p}(\R_{+}^{n};\ell^{2})$,
(\ref{eq:half-space-1}) has a unique solution $u\in\mathcal{W}_{\circ}^{2,p}(\R_{+}^{n})$,
and
\begin{equation}
\|u\|_{\bW^{2,p}(\R_{+}^{n})}\le C\bigl(\|f\|_{\bL^{p}(\R_{+}^{n})}+\|g\|_{\bW^{1,p}(\R_{+}^{n};\ell^{2})}\bigr);\label{eq:est-half}
\end{equation}
 \emph{(ii)} if $g\in\bL^{p}(\R_{+}^{n};\ell^{2})$ and $f=f^{0}+c^{i}F_{x^{i}}$
with $f^{0},F\in\bL^{p}(\R_{+}^{n})$ and $c^{i}\in\bL^{\infty}(G)$,
then (\ref{eq:half-space-1}) has a unique solution $u\in\mathcal{W}_{\circ}^{1,p}(\R_{+}^{n})$,
and
\begin{equation}
\|u\|_{\bW^{1,p}(\R_{+}^{n})}\le C\bigl(\|(f^{0},F)\|_{\bL^{p}(\R_{+}^{n})}+\|g\|_{\bL^{p}(\R_{+}^{n};\ell^{2})}\bigr);\label{eq:est-half-3}
\end{equation}
where the constant $C$ depends only on $\kappa,K,n,p,$ $T$, and
additionally on $\|c^{i}\|_{\bL^{\infty}}$ for \emph{(ii)}.
\end{prop}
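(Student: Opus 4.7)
My plan is to transfer Krylov's whole-space $L^p$-theory (\citet{krylov1996l_p}) to the half space $\R_+^n$ via the odd reflection $x^1 \mapsto -x^1$, exploiting the hypothesis $\sigma^{1\cdot}\equiv 0$. Consider first the special situation in which, in addition, $a^{1j}(t)=0$ for every $j\ge 2$. Extending $f$ and each $g^k$ to $\R^n$ as odd functions of $x^1$ preserves the $\bL^p(\R^n)$ and $\bW^{1,p}(\R^n;\ell^2)$ norms up to a factor of $2$ --- the latter depending crucially on $g^k|_{\{x^1=0\}}=0$, which is exactly the content of $g\in\bW_{\circ}^{1,p}$. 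Applying the whole-space $L^p$-theory (the coefficients are constant in $x$) yields a unique $\tilde u\in\mathcal{W}^{2,p}(\R^n)$. A short parity check, in which $\sigma^{1\cdot}\equiv 0$ ensures that only tangential first derivatives enter the It\^o term and $a^{1j}=0$ ($j\ge 2$) eliminates mixed normal--tangential second derivatives in the drift, shows that $(t,x^1,x')\mapsto -\tilde u(t,-x^1,x')$ solves the same Cauchy problem; uniqueness then forces $\tilde u$ to be odd in $x^1$, so $u:=\tilde u|_{\R_+^n}$ lies in $\mathcal{W}_{\circ}^{2,p}(\R_+^n)$ and satisfies (\ref{eq:est-half}).

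\textbf{Removing the diagonal assumption.} To drop the restriction $a^{1j}=0$ ($j\ge 2$), I would close the argument by the method of continuity along a suitably chosen homotopy interpolating between $a^{ij}_{\mathrm{diag}}$ (the matrix obtained by zeroing the off-block entries $a^{1j}, a^{j1}$, $j\ge 2$) and $a^{ij}$, ensuring strong parabolicity (\ref{eq:parabolic}) along the whole path. The base case is handled above. For the induction step, writing the SPDE at a nearby parameter as $L_{\lambda_0}u = F - (\lambda-\lambda_0)(a^{ij}-a^{ij}_{\mathrm{diag}})u_{x^i x^j}$ and using the established $\bW^{2,p}$-estimate at $\lambda_0$ in combination with a Banach-fixed-point argument in $\mathcal{W}_{\circ}^{2,p}(\R_+^n)$ gives existence and a uniform bound on a small subinterval, and iterating a finite number of times reaches $\lambda=1$. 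Uniqueness for (\ref{eq:half-space-1}) is then immediate from the estimate applied to the difference of two solutions. Lower-order terms like $b^i u_{x^i}$ or $c u$ (if present after shifting by a large $\lambda u$) are absorbed by the standard interpolation $\|u\|_{\bW^{1,p}}\le\varepsilon\|u\|_{\bW^{2,p}}+C_\varepsilon\|u\|_{\bL^p}$.

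\textbf{Part (ii) by duality.} For part (ii), the data are less regular ($f$ is in divergence form $c^i F_{x^i}$ and $g$ need not vanish on the boundary), so the expected regularity is only $\mathcal{W}_{\circ}^{1,p}$. I would proceed by duality against part (i): given $\phi\in C_0^\infty(\R_+^n)$, solve the formal adjoint equation backward in time on $[0,T]$ with data derived from $\phi$ via part (i) to produce a $\mathcal W^{2,p'}$-solution satisfying the corresponding $L^{p'}$-estimate. Pairing against $u$ and shifting one derivative by integration by parts so that the bad contribution $c^i F_{x^i}\phi$ becomes $-F c^i(\mathrm{adjoint})_{x^i}$ --- an $\bL^p\times\bL^{p'}$-pairing controlled by the adjoint bound --- delivers (\ref{eq:est-half-3}). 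Existence then follows by approximating $(f^0,F,g)$ by smooth data (for which part (i) applies and yields $\mathcal{W}^{2,p}$-solutions) and passing to the $\bW^{1,p}$-limit using the estimate just derived.

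\textbf{Expected main obstacle.} The genuinely delicate point is the uniform a priori $\bW^{2,p}$-bound in the generic situation $a^{1j}\ne 0$ for some $j\ge 2$: a direct odd reflection fails because the cross second derivative $u_{x^1 x^j}$ has the wrong parity, and only tangential differentiation preserves the homogeneous Dirichlet condition (since $u|_{\{x^1=0\}}=0$ implies $u_{x^j}|_{\{x^1=0\}}=0$ only for $j\ge 2$). This forces us to treat the off-block entries as a perturbation of the diagonal problem, and it is precisely at this step that the hypothesis $\sigma^{1\cdot}=0$ is essential: combined with $a^{11}\ge\kappa/2$ (which follows from (\ref{eq:parabolic}) once $\sigma^{1\cdot}=0$), it allows one to invert the principal normal--normal operator cleanly while the remaining stochastic contribution is controlled in $\bL^p$ by Burkholder--Davis--Gundy.
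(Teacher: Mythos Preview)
Your odd-reflection argument for the block-diagonal case $a^{1j}=0$ ($j\ge 2$) is fine, but the method-of-continuity step has a real gap: that method requires an a priori estimate $\|u\|_{\bW^{2,p}}\le C(\|f\|_{\bL^p}+\|g\|_{\bW^{1,p}})$ with $C$ \emph{uniform} in $\lambda\in[0,1]$, and you have not produced one independently. Bootstrapping the estimate along with existence does not work: if the constant at $\lambda_0$ is $C_0$, your contraction step reaches $\lambda_0+\delta$ with $\delta\sim 1/(C_0 K)$ and new constant $\sim 2C_0$, so the step sizes form a geometric series whose sum is bounded by $\sim 1/(C_0K)$ --- there is no guarantee this covers $[0,1]$. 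The duality proposal for (ii) has a separate problem: the adjoint of a forward SPDE is a \emph{backward} SPDE (a pair $(v,z)$ solving $-\mathrm{d}v=L^*v\,\mathrm{d}t+\cdots+z^k\,\mathrm{d}w^k$), not another forward equation, so part (i) does not furnish the test solution you need for the pairing.

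The paper sidesteps both issues by a random translation. Split $u=\hat u+\tilde u$. First solve $\mathrm{d}\hat u=K\Delta\hat u\,\mathrm{d}t+(\sigma^{ik}\hat u_{x^i}+g^k)\,\mathrm{d}w^k$ on $\R_+^n$ by odd reflection and Krylov's whole-space theory --- the second-order part is the pure Laplacian, so no off-block terms obstruct the parity argument. Next set $\xi_t=(0,\int_0^t\sigma^{2k}\,\mathrm{d}w^k_s,\dots,\int_0^t\sigma^{nk}\,\mathrm{d}w^k_s)$; since $\sigma^{1\cdot}\equiv 0$ the first component vanishes and the shift $x\mapsto x\pm\xi_t$ preserves $\R_+^n$. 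Solve the random \emph{deterministic} PDE $\partial_t v=(a^{ij}-\tfrac12\sigma^{ik}\sigma^{jk})v_{x^ix^j}+\tilde f(t,x-\xi_t)$ on $\R_+^n$ pathwise by classical $W^{2,p}$ theory, which handles arbitrary uniformly parabolic $x$-independent coefficients with no condition whatsoever on $a^{1j}$, and apply the It\^o-Wentzell formula to $\tilde u(t,x):=v(t,x+\xi_t)$, obtaining $\mathrm{d}\tilde u=(a^{ij}\tilde u_{x^ix^j}+\tilde f)\,\mathrm{d}t+\sigma^{ik}\tilde u_{x^i}\,\mathrm{d}w^k$. Choosing $\tilde f=f-(K\delta^{ij}-a^{ij})\hat u_{x^ix^j}$ makes $u=\hat u+\tilde u$ solve (\ref{eq:half-space-1}), and (\ref{eq:est-half}) follows by adding the two estimates. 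Part (ii) is handled by the same decomposition at one derivative lower, using the $W^{1,p}$ versions of the whole-space SPDE estimate and the deterministic divergence-form theory --- no duality is needed.
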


\begin{proof}
The proofs of (i) and (ii) are quite similar, so we only present the
proof of (i) in details. Consider the following equation
\begin{equation}
\md\hat{u}=K\lap\hat{u}\vd t+(\sigma^{ik}\hat{u}_{x^{i}}+g^{k})\vd\BM_{t}^{k}\quad\text{on }(0,T]\times\R_{+}^{n}\label{eq:lap-halp}
\end{equation}
with zero initial-boundary condition. Obviously, this equation is
also strongly parabolic. Define the odd continuation of $g$, i.e.,
\begin{equation}
g(x^{1},x'):=-g(-x^{1},x'),\quad\forall x^{1}<0,\,x'\in\R^{n-1}.\label{eq:odd-ext}
\end{equation}
As $g\in\bW_{\circ}^{1,p}(\R_{+}^{n};\ell^{2})$, the continued function
$g$ belongs to $\bW_{\circ}^{1,p}(\R^{n};\ell^{2})$. By Theorem
5.1 in \citet{krylov1999analytic}, there exists a unique solution
$\hat{u}\in\mathcal{W}_{\circ}^{2,p}(\R^{n})$ of (\ref{eq:lap-halp})
considered in the whole $\R^{n}$ with zero initial condition. From
the uniqueness, $\hat{u}(t,x)=\hat{u}(t,x^{1},x')$ is odd with respect
to $x^{1}$, so $\hat{u}(t,x)=0$ for $x^{1}=0$, which means that
$\hat{u}$ restricted in $\R_{+}^{n}$ satisfies (\ref{eq:lap-halp})
with zero initial-boundary condition, and $\hat{u}\in\mathcal{W}_{\circ}^{2,p}(\R_{+}^{n})$.
Also from Theorem 5.1 in \citet{krylov1999analytic}, we have the
following estimate
\begin{equation}
\|\hat{u}\|_{\bW^{2,p}(\R_{+}^{n})}\le C\|g\|_{\bW^{1,p}(\R_{+}^{n};\ell^{2})},\label{eq:hat-u}
\end{equation}
where the constant $C$ depends only on $\kappa,K,n,p$ and $T$.

Define a stochastic process $\xi_{t}=(0,\xi_{t}^{2},\dots,\xi_{t}^{n})$
with
\[
\xi_{t}^{i}=\int_{0}^{t}\sigma^{ik}(s)\vd\BM_{s}^{k},\quad i=2,\dots,n.
\]
It is easily seen that for each $x\in\R_{+}^{n}$ the process $x\pm\xi_{t}$
always stays in $\R_{+}^{n}$. Moreover, for any given $\tilde{f}\in\bL^{p}(\R_{+}^{n})$,
the random translation $\tilde{f}(t,x-\xi_{t})$ as a function of
$(t,x,\omega)$ also lies in $\bL^{p}(\R_{+}^{n})$, and
\[
\|\tilde{f}(\omega)\|_{L^{p}((0,T)\times\R_{+}^{n})}=\|\tilde{f}(\cdot,\cdot-\xi_{\cdot}(\omega),\omega)\|_{L^{p}((0,T)\times\R_{+}^{n})}.
\]
Consider the following random partial differential equation (PDE)
\begin{equation}
\begin{aligned}\partial_{t}v & =\Bigl(a^{ij}-\frac{1}{2}\sigma^{ik}\sigma^{jk}\Big)v_{x^{i}x^{j}}+\tilde{f}(t,x-\xi_{t}),\quad\text{on }(0,T]\times\R_{+}^{n},\\
v|_{t=0} & =0,\quad v|_{x^{1}=0}=0.
\end{aligned}
\label{eq:randomPDE}
\end{equation}
Due to (\ref{eq:parabolic}), this PDE is strongly parabolic. Moreover,
$\tilde{f}(t,x-\xi_{t}(\omega),\omega)$ as a function of $(t,x)$
belongs to $L^{p}((0,T)\times\R_{+}^{n})$ for almost every $\omega$.
So by the classical PDE theory (cf. \citet[Theorem~7.32]{lieberman1996second}),
problem (\ref{eq:randomPDE}) has a unique strong solution 
\[
v(\cdot,\cdot,\omega)\in L^{p}((0,T),W_{\circ}^{2,p}(\R_{+}^{n}))\times C([0,T],L^{p}(\R_{+}^{n}))
\]
for almost every $\omega$, and $v(\cdot,\cdot,\omega)$ satisfies
the estimates
\[
\|v(\cdot,\cdot,\omega)\|_{L^{p}((0,T),W^{2,p}(\R_{+}^{n}))}^{p}\le C\|\tilde{f}(\cdot,\cdot,\omega)\|_{L^{p}((0,T)\times\R_{+}^{n})}^{p},
\]
where the constant $C$ depends only on $\kappa,K,n,p$ and $T$,
but not on $\omega$. Thus, one has $v\in\mathcal{W}_{\circ}^{2,p}(\R_{+}^{n})$,
and taking mathematical expectation on the above estimate yields
\begin{equation}
\|v\|_{\bW^{2,p}(\R_{+}^{n})}^{p}\le C\|\tilde{f}\|_{\bL^{p}(\R_{+}^{n})}^{p}.\label{eq:v01}
\end{equation}
Now applying the It\^o-Wentzell formula obtained in \citet{Krylov2011On}
to $\tilde{u}(t,x):=v(t,x+\xi_{t})$, one can check that $\tilde{u}\in\mathcal{W}_{\circ}^{2,p}(\R_{+}^{n})$,
and it solves the problem
\begin{equation}
\md\tilde{u}=(a^{ij}\tilde{u}_{x^{i}x^{j}}+\tilde{f})\vd t+\sigma^{ik}\tilde{u}_{x^{i}}\vd\BM_{t}^{k},\quad\tilde{u}|_{t=0}=0,\quad\tilde{u}|_{x^{1}=0}=0,\label{eq:proof003}
\end{equation}
and satisfies the estimate
\begin{equation}
\|\tilde{u}\|_{\bW^{2,p}(\R_{+}^{n})}=\|v\|_{\bW^{2,p}(\R_{+}^{n})}\le C\|\tilde{f}\|_{\bL^{p}(\R_{+}^{n})}.\label{eq:tilde-u}
\end{equation}
On the other hand, we remark that $\tilde{u}\in\mathcal{W}_{\circ}^{2,p}(\R_{+}^{n})$
is a solution to (\ref{eq:proof003}) if and only if $v(t,x)=\tilde{u}(t,x-\xi_{t})$
is the solution to (\ref{eq:randomPDE}); as the latter has a unique
solution, the solution of (\ref{eq:proof003}) is also unique.

Define $u=\hat{u}+\tilde{u}\in\mathcal{W}_{\circ}^{2,p}(\R_{+}^{n})$.
It follows from equations (\ref{eq:lap-halp}) and (\ref{eq:proof003})
that
\[
\md u=[a^{ij}u_{x^{i}x^{j}}+(K\delta^{ij}-a^{ij})\hat{u}_{x^{i}x^{j}}+\tilde{f}]\vd t+(\sigma^{ik}u_{x^{i}}+g^{k})\vd\BM_{t}^{k},
\]
where $\delta^{ij}$ is the Kronecker delta. With
\begin{equation}
\tilde{f}=f-(K\delta^{ij}-a^{ij})\hat{u}_{x^{i}x^{j}},\label{eq:pf005}
\end{equation}
it is seen that $u$ is a solution to problem (\ref{eq:half-space-1}),
so the existence part is proved. Moreover, from estimates (\ref{eq:hat-u})
and (\ref{eq:tilde-u}), the obtained $u$ satisfies
\begin{equation}
\begin{aligned}\|u\|_{\bW^{2,p}(\R_{+}^{n})} & \le\|\hat{u}+\tilde{u}\|_{\bW^{2,p}(\R_{+}^{n})}\\
 & \le C\bigl(\|g\|_{\bW^{1,p}(\R_{+}^{n};\ell^{2})}+\|f-(K\delta^{ij}-a^{ij})\hat{u}_{x^{i}x^{j}}\|_{\bL^{p}(\R_{+}^{n})}\bigr)\\
 & \le C\bigl(\|f\|_{\bL^{p}(\R_{+}^{n})}+\|g\|_{\bW^{1,p}(\R_{+}^{n};\ell^{2})}+\|\hat{u}_{xx}\|_{\bL^{p}(\R_{+}^{n})}\bigr)\\
 & \le C\bigl(\|f\|_{\bL^{p}(\R_{+}^{n})}+\|g\|_{\bW^{1,p}(\R_{+}^{n};\ell^{2})}\bigr),
\end{aligned}
\label{eq:pf004}
\end{equation}
where $C=C(\kappa,K,n,p,T)$. 

To prove the uniqueness, we let $u^{*}\in\mathcal{W}_{\circ}^{2,p}(\R_{+}^{n})$
be any solution of (\ref{eq:half-space-1}), $\hat{u}$ be the solution
of (\ref{eq:lap-halp}) determined by $g$. Then $u^{*}-\hat{u}$
satisfies (\ref{eq:proof003}) with $\tilde{f}$ given by (\ref{eq:pf005}),
so by uniqueness (for problem (\ref{eq:proof003})) we have $u^{*}-\hat{u}=\tilde{u}$,
which means $u=u^{*}$. The uniqueness part is also proved, and the
estimate (\ref{eq:est-half}) follows from (\ref{eq:pf004}) immediately.
The proof is complete.
\end{proof}

\subsection{A priori estimates}

In the following result we obtain a priori estimates for linear equations
in general domains. We adapt the technique of straightening (the boundary)
and partitioning (the domain) from PDE theory (cf. \citet{gilbarg2001elliptic,Krylov2008Lectures}).
The new difficulties here are due to the compatibility conditions
and the (possible) unbounded domains. Recall that $u\in\mathcal{W}_{\circ}^{2,p}(G)$
implies $u(t,\cdot)=0$ on the boundary $\pd\Dom.$
\begin{prop}
\label{prop:apriori}Let $\Dom\in\mathcal{C}^{2}$ and Assumptions
\ref{assu:compatib} and \ref{assu:continuity} be satisfied. Suppose
that $u\in\mathcal{W}_{\circ}^{2,p}(G)$ with $u(0,\cdot)=0$ satisfies
the equation
\begin{equation}
\md u=(a^{ij}u_{x^{i}x^{j}}+f)\vd t+(\sigma^{ik}u_{x^{i}}+g^{k})\vd\BM_{t}^{k}\label{eq:general-dom}
\end{equation}
for some $f\in\bL^{p}(G)$ and $g\in\bW_{\circ}^{1,p}(G)$. Then we
have
\begin{equation}
\|u\|_{\bW^{2,p}(G)}\le C\bigl(\|f\|_{\bL^{p}(G)}+\|g\|_{\bW^{1,p}(G;\ell^{2})}\bigr),\label{eq:apriori}
\end{equation}
where the constant $C$ depends only on $\kappa,K,n,p,T,K_{0},\rho_{0},L,$
and the functions $\cm(\cdot)$.
\end{prop}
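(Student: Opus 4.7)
The plan is to reduce (\ref{eq:general-dom}) to the constant-coefficient half-space model of Proposition \ref{prop:half-constant} by the classical localize-and-flatten technique from PDE theory (cf.\ \citet{gilbarg2001elliptic,Krylov2008Lectures}), paying particular attention to two features not present in the deterministic case: Assumption \ref{assu:compatib} must survive the change of variables in exactly the form $\sigma^{1\cdot}\equiv 0$ required by Proposition \ref{prop:half-constant}, and the covering must be arranged with finite overlap since $\Dom$ may be unbounded.

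First I would fix a small radius $r\in(0,\rho_0]$, to be chosen later in terms of $\cm$, $L$, $K$, $K_0$, and, via a Besicovitch/Vitali argument in $\R^n$, construct a countable family of balls $\{B_r(z_\alpha)\}$ covering $\bar\Dom$ with overlap bounded by some $N=N(n)$, each center $z_\alpha$ lying either in $\Dom$ (interior balls, chosen so that $B_{2r}(z_\alpha)\subset\Dom$) or on $\pd\Dom$ (boundary balls). Attach a subordinate partition of unity $\{\zeta_\alpha\}$ with $|\zeta_\alpha|+r|\zeta_{\alpha,x}|+r^{2}|\zeta_{\alpha,xx}|\le C(n)$. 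For an interior ball I apply the It\^o--Wentzell calculus to $v_\alpha=\zeta_\alpha u$ to obtain an SPDE on the whole of $\R^n$ whose leading coefficients I freeze at $z_\alpha$; the whole-space $L^p$-theory from \citet{krylov1999analytic} (which is also contained in Proposition \ref{prop:half-constant} via odd reflection) yields an estimate of $\|v_\alpha\|_{\bW^{2,p}}$ in terms of the data, an error $\cm(2r)\,\|u_{xx}\|_{\bL^p(B_{2r}(z_\alpha))}$ from the freezing, and lower-order terms.

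For a boundary ball I compose with the flattening diffeomorphism $\psi_{z_\alpha}$ from Definition \ref{def:domain} and work with $\tilde u(t,y)=(\zeta_\alpha u)(t,\psi_{z_\alpha}^{-1}(y))$ on $\R_+^n$. The transformed equation has the same structure with new coefficients $\tilde a^{ij},\tilde\sigma^{ik}$ inheriting strong parabolicity and the same continuity moduli up to constants depending on $K_0$. The key observation is that (\ref{eq:compatib}), combined with the fact that $\nabla\psi_{z_\alpha}^{1}$ is a nonzero normal to $\pd\Dom$ at each point of $\pd\Dom\cap B_{\rho_0}(z_\alpha)$, implies
\[
\tilde\sigma^{1k}(t,y)\big|_{y^{1}=0}\;=\;0,\qquad k=1,2,\ldots.
\]
Hence freezing $\tilde a^{ij}$ and $\tilde\sigma^{ik}$ at $y=0$ produces exactly the hypothesis $\sigma^{1\cdot}\equiv 0$ of Proposition \ref{prop:half-constant}, plus three perturbation sources: a variation of $\tilde a^{ij}$ bounded by $\cm(Cr)$; a variation of the tangential components $\tilde\sigma^{ik}$, $i\ge 2$, bounded by $CLr$ via Assumption \ref{assu:continuity}; and the residual $\tilde\sigma^{1k}(t,y)$ itself, which vanishes on $\{y^{1}=0\}$ and is therefore dominated by $CLy^{1}\le CLr$ on the support of $\tilde u$. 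Applying Proposition \ref{prop:half-constant} to the frozen problem and taking $r$ small enough absorbs all three error contributions into $\|\tilde u\|_{\bW^{2,p}(\R_+^n)}$.

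Finally I sum the local estimates over $\alpha$. The bounded overlap gives $\sum_\alpha\|v_\alpha\|^{p}_{\bW^{2,p}}\ge c\|u\|^{p}_{\bW^{2,p}(\Dom)}$ with a matching upper bound on the right-hand side, so the resulting constant depends only on the quantities listed in the statement and \emph{not} on the size of $\Dom$. Commutator terms generated by $\zeta_\alpha$ and by $\psi_{z_\alpha}$ involve only $u_x$ and $u$ and are handled by the interpolation $\|u_x\|_{\bL^p}\le\eps\|u_{xx}\|_{\bL^p}+C_\eps\|u\|_{\bL^p}$; the surviving $\|u\|_{\bL^p}$ term, which carries an $\int_0^T$-in-time weight, is removed by a Gronwall argument after rerunning the same estimate on every subinterval $[0,t]\subset[0,T]$. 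The hardest step is the boundary patch: verifying that Assumption \ref{assu:compatib} transfers to the \emph{pointwise} identity $\tilde\sigma^{1k}=0$ on all of $\{y^{1}=0\}$ (not merely at the origin) and then exploiting the extra factor $y^{1}$ to absorb the $\tilde\sigma^{1k}$-perturbation; the interior case and the unbounded-domain bookkeeping are comparatively routine once the finite-overlap covering is in place.
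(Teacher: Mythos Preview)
Your plan is essentially the paper's own argument: localize, flatten, freeze coefficients, invoke Proposition~\ref{prop:half-constant}, sum via a finite-overlap covering, absorb the small $u_{xx}$-error by choosing $r$ small, and finally remove the residual $\|u\|_{\bL^{p}}$. Two points deserve comment.

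First, and most importantly, you correctly identify that Assumption~\ref{assu:compatib} yields $\tilde\sigma^{1k}|_{y^{1}=0}=0$ and that this matches the hypothesis of Proposition~\ref{prop:half-constant}. But you do not address the companion requirement: the \emph{entire} perturbation appearing on the stochastic side---your frozen-coefficient remainder---must lie in $\bW_{\circ}^{1,p}(\R_{+}^{n};\ell^{2})$, not merely in $\bW^{1,p}$, since Proposition~\ref{prop:half-constant}(i) needs $g|_{y^{1}=0}=0$ for the odd-extension trick. For the normal piece $\tilde\sigma^{1k}(t,y)\tilde u_{y^{1}}$ you have this, via the vanishing of $\tilde\sigma^{1k}$ on the boundary. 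For the tangential pieces $[\tilde\sigma^{ik}(t,y)-\tilde\sigma^{ik}(t,0)]\tilde u_{y^{i}}$ with $i\ge 2$, however, the coefficient does \emph{not} vanish on $\{y^{1}=0\}$; what saves you is that $\tilde u_{y^{i}}$ is a tangential derivative of a function with zero trace and hence itself has zero trace. The paper isolates exactly this verification in a short lemma (assertions (a)--(c) following Lemma~\ref{lem:tilde-a-sigma}); you should make it explicit, since without it Proposition~\ref{prop:half-constant} is not applicable and the boundary estimate collapses.

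Second, a minor divergence: for the final removal of $\|u\|_{\bL^{p}}$ the paper does not run Gronwall but applies It\^o's formula to $\me^{-\lambda t}|u|^{p}$ with $\lambda$ large, obtaining directly $\|u\|_{\bL^{p}}^{p}\le \eps\|u_{xx}\|_{\bL^{p}}^{p}+C_{\eps}(\|f\|^{p}+\|g\|^{p})$, which feeds straight back into the preceding estimate. Your Gronwall route also works once you supplement it with the same It\^o computation to bound $\E\|u(t,\cdot)\|_{L^{p}}^{p}$, but the exponential-weight argument is cleaner and avoids tracking the time parameter.
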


\begin{proof}
Fix a $z\in\pd\Dom$ and take the objects associated with $z$ from
Definition \ref{def:domain}. For a function $h$ defined in $B_{\rho_{0}}(z)\cap\Dom$,
we introduce
\[
\tilde{h}(y)=h\circ\psi^{-1}(y)=h(\psi^{-1}(y))\quad\forall\,y\in U_{+}^{z}.
\]
Obviously, $h(x)=\tilde{h}\circ\psi(x)$. In what follows, we keep
the relation
\[
y=\psi(x)\quad\text{for }x\in B_{\rho_{0}}(z)\cap\Dom,
\]
which implies that $h(x)=\tilde{h}(y)$. 

For the sake of convenience, we denote $h_{i}=\partial_{i}h$ in this
subsection to be the partial derivative of a function $u$ with respect
to the $i$-th spatial variable. Then for $h\in W^{2,p}(B_{\rho_{0}}(z)\cap\Dom)$
we have
\[
\begin{aligned}h_{i}(x) & =\psi_{i}^{r}(x)\tilde{h}_{r}(y),\\
h_{ij}(x) & =\psi_{i}^{r}(x)\psi_{j}^{s}(x)\tilde{h}_{rs}(y)+\psi_{ij}^{r}(x)\tilde{h}_{r}(y).
\end{aligned}
\]
The following result is taken from Lemma 8.3.4 in \citet{Krylov2008Lectures}.
\begin{lem}
\label{lem:equivalent}$h\in W^{k,p}(B_{\rho_{0}}(z)\cap\Dom)$ if
and only if $\tilde{h}\in W^{k,p}(U_{+}^{z})$ for $k=0,1,2$. Moreover,
\[
C^{-1}\|h\|_{W^{k,p}(B_{\rho_{0}}(z)\cap\Dom)}\le\|\tilde{h}\|_{W^{k,p}(U_{+}^{z})}\le C\|h\|_{W^{k,p}(B_{\rho_{0}}(z)\cap\Dom)}
\]
with $C=C(n,p,K_{0})$. 
\end{lem}

Let $\eta\in C_{0}^{\infty}(\R^{n})$ such that $\eta(x)=1$ for $|x|\le\rho_{0}/2$
and $\eta(x)=0$ for $|x|\ge3\rho_{0}/4$. With $y=\psi(x)$ (only
for $x\in B_{\rho_{0}}(z)\cap\Dom$) we define
\[
\begin{aligned}\eta^{z}(x) & =\eta(x-z),\quad\tilde{\eta}^{z}(y)=\eta^{z}(x),\\
\tilde{a}^{rs}(t,y) & =a^{ij}(t,x)\psi_{i}^{r}(x)\psi_{j}^{s}(x)\tilde{\eta}^{z}(y)+K\delta^{rs}[1-\tilde{\eta}(y)],\\
\tilde{\sigma}^{rk}(t,y) & =\sigma^{ik}(t,x)\psi_{i}^{r}(x)\tilde{\eta}(y).
\end{aligned}
\]
Formally speaking, $\tilde{a}^{rs}(t,y)$ and $\tilde{\sigma}^{rk}(t,y)$
are not defined for $y\notin\bar{U}^{z}$, but we may set $\tilde{\eta}^{z}(y)=0$
for those $y$ and the corresponding terms to be zero, then $\tilde{a}^{rs}(t,y)$
and $\tilde{\sigma}^{rk}(t,y)$ are well-defined for all $y\in\R_{+}^{n}$.
From Lemma 8.3.6 in \citet{Krylov2008Lectures}, we have
\begin{lem}
\emph{\label{lem:tilde-a-sigma}(i)} For any $y,y_{1},y_{2}\in\R_{+}^{n}$
and $(t,\omega)\in[0,T]\times\PS$,
\[
\begin{aligned}|\tilde{a}^{rs}(t,y)|+\|\tilde{\sigma}^{r\cdot}(t,y)\|_{\ell^{2}} & \le\tilde{K}(n,K,K_{0}),\\
|\tilde{a}^{rs}(t,y_{1})-\tilde{a}^{rs}(t,y_{2})| & \le\tilde{\cm}(|y_{1}-y_{2}|),\\
\|\tilde{\sigma}^{r\cdot}(t,y_{1})-\tilde{\sigma}^{r\cdot}(t,y_{2})\|_{\ell^{2}} & \le\tilde{L}(n,K,K_{0},L),
\end{aligned}
\]
where $\tilde{\cm}(\cdot)$ is a modulus of continuity determined
only by $\cm(\cdot),n,K$ and $K_{0}$.

\emph{(ii)} There is a constant $\tilde{\kappa}=\tilde{\kappa}(n,\kappa,K_{0})>0$
such that
\[
(2\tilde{a}^{rs}(t,y)-\tilde{\sigma}^{rk}(t,y)\tilde{\sigma}^{sk}(t,y))\xi^{i}\xi^{j}\ge\tilde{\kappa}|\xi|^{2}
\]
for all $\xi=(\xi^{1},\dots,\xi^{n})\in\R^{n}$ and all $(t,y)\in[0,T]\times\R_{+}^{n}$.
\end{lem}

Let $\rho\in(0,\rho_{0}\wedge1]$ be a constant to be specified later,
and take a nonnegative function $\zeta\in C_{0}^{\infty}(\R^{n})$
such that $\zeta(x)=1$ for $|x|\le\rho/4$ and $\zeta(x)=0$ for
$|x|\ge\rho/2$. Set 
\begin{equation}
\zeta^{z}(x)=\zeta(x-z)\quad\text{and}\quad\tilde{\zeta}^{z}(y)=\zeta^{z}(x)=\zeta^{z}(\psi^{-1}(y)).\label{eq:pf020}
\end{equation}
It is easily checked that $\rho|\zeta_{x}|+\rho^{2}|\zeta_{xx}|\le C(n)$.

Let $u\in\mathcal{W}_{\circ}^{2,p}(\Dom)$ be a solution to Eq. (\ref{eq:general-dom})
with $u(0,\cdot)=0$. Define
\begin{equation}
\tilde{u}^{z}(t,y)=\Big\{\begin{array}{ll}
\tilde{\zeta}^{z}(y)u(t,x) & \text{for }y\in\bar{U}_{+}^{z},\\
0 & \text{elsewhere}.
\end{array}\label{eq:pr018}
\end{equation}
A direct computation gives that the function $\tilde{u}^{z}\in\mathcal{W}_{\circ}^{2,p}(\R_{+}^{n})$,
whose support lies in $\psi(B_{\rho/2}(z))\cap\bar{U}_{+}^{z}$, satisfies
the following equation
\begin{equation}
\begin{aligned}\md\tilde{u}^{z}(t,y) & =(\tilde{a}^{rs}(t,0)\tilde{u}_{rs}^{z}(t,y)+\hat{f}^{z}(t,y))\vd t+(\tilde{\sigma}^{rk}(t,0)\tilde{u}_{r}^{z}(t,y)+\hat{g}^{z,k}(t,y))\vd\BM_{t}^{k}\\
\tilde{u}^{z}|_{t=0} & =0,\quad\tilde{u}^{z}|_{y^{1}=0}=0,
\end{aligned}
\label{eq:tilde-uz}
\end{equation}
where
\begin{align}
\hat{f}^{z}(t,y) & =[\tilde{a}^{rs}(t,y)-\tilde{a}^{rs}(t,0)]\tilde{u}_{rs}^{z}(t,y)+\tilde{\zeta}^{z}(y)f(t,x)-\tilde{a}^{rs}(t,y)\tilde{\zeta}_{rs}^{z}(y)\tilde{u}(t,y)\nonumber \\
 & \quad-\tilde{a}^{rs}(t,y)\tilde{\zeta}_{s}^{z}(y)\tilde{u}_{r}(t,y)+a^{ij}(t,x)\psi_{ij}^{r}(x)\tilde{\zeta}^{z}(y)\tilde{u}_{r}(t,y),\nonumber \\
\hat{g}^{z,k}(t,y) & =[\tilde{\sigma}^{rk}(t,y)-\tilde{\sigma}^{rk}(t,0)]\tilde{u}_{r}^{z}(t,y)+\tilde{\zeta}^{z}(y)g^{k}(t,x)-\tilde{\sigma}^{rk}(t,y)\tilde{\zeta}_{r}^{z}(y)\tilde{u}^{z}(t,y).\label{eq:pr010}
\end{align}
To apply Proposition \ref{prop:apriori} to Eq. (\ref{eq:tilde-uz}),
we need to verify the following conditions:
\begin{align}
 & \tilde{\sigma}^{1\cdot}(t,0,y')=0\ \ \forall\,y'\in\R_{+}^{n},\label{eq:pf-sigma}\\
 & \hat{f}^{z}\in\bL^{p}(\R_{+}^{n})\ \text{ and }\ \hat{g}^{z}\in\bW_{\circ}^{1,p}(\R_{+}^{n}).\label{eq:pf-fg}
\end{align}

To check (\ref{eq:pf-sigma}), we notice that, from Definition \ref{def:domain},
the equation of the surface $B_{\rho_{0}}(z)\cap\pd\Dom$ is $\psi^{1}(x)=0$,
so $\partial\psi^{1}(x)$ is a normal vector of $\partial G$ at $x\in B_{\rho_{0}}(z)\cap\pd\Dom$.
Thanks to Assumption \ref{assu:compatib}, one has that for $x\in B_{\rho_{0}}(z)\cap\pd\Dom$,
\begin{equation}
0=\partial\psi^{1}(x)\cdot\sigma^{\cdot k}(t,x)=\sigma^{rk}(t,x)\partial_{r}\psi^{1}(x)=\tilde{\sigma}^{1k}(t,\psi(x)).\label{eq:pf011}
\end{equation}
Also notice that $\tilde{\sigma}^{1k}(t,\cdot)=0$ outside $\bar{U}_{+}^{z}$.
So (\ref{eq:pf-sigma}) is valid. 

To check (\ref{eq:pf-fg}), one can use Lemmas \ref{lem:equivalent}
and \ref{lem:tilde-a-sigma} to obtain that $\hat{f}^{z}\in\bL^{p}(\R_{+}^{n})$,
$\hat{g}^{z}\in\bW^{1,p}(\R_{+}^{n})$, and
\[
\begin{aligned}\|\hat{f}^{z}\|_{\bL^{p}(\R_{+}^{n})} & \le C\tilde{\cm}(\rho)\|\tilde{u}_{yy}^{z}\|_{\bL^{p}(\R_{+}^{n})}+C\big\{\|\tilde{\zeta}^{z}\tilde{f}\|_{\bL^{p}(\R_{+}^{n})}+\|(\tilde{\zeta}_{yy}^{z}\tilde{u},\tilde{\zeta}_{y}^{z}\tilde{u}_{y},\tilde{\zeta}^{z}\tilde{u}_{y})\|_{\bL^{p}(\R_{+}^{n})}\big\}\\
 & \le C\tilde{\cm}(\rho)\|u_{xx}^{z}\|_{\bL^{p}(G)}+C\big\{\|\zeta^{z}f\|_{\bL^{p}(G)}+\|(\zeta_{xx}^{z}u,\zeta_{x}^{z}u,\zeta^{z}u,\zeta_{x}^{z}u_{x},\zeta^{z}u_{x})\|_{\bL^{p}(G)}\big\}\\
 & \le C\tilde{\cm}(\rho)\|u_{xx}\|_{\bL^{p}(B_{\rho/2}(z)\cap\Dom)}+C\big\{\|f\|_{\bL^{p}(B_{\rho/2}(z)\cap\Dom)}+\rho^{-2}\|u\|_{\bW^{1,p}(B_{\rho/2}(z)\cap\Dom)}\big\},\\
\|\hat{g}^{z}\|_{\bW^{1,p}(\R_{+}^{n};\ell^{2})} & \le C\tilde{L}\rho\|\tilde{u}_{yy}^{z}\|_{\bL^{p}(\R_{+}^{n})}+C\big\{\|\tilde{\zeta}^{z}\tilde{g}\|_{\bW^{1,p}(\R_{+}^{n};\ell^{2})}+\|\tilde{\zeta}_{y}^{z}\tilde{u}\|_{\bW^{1,p}(\R_{+}^{n})}\big\}\\
 & \le C\tilde{L}\rho\|u_{xx}\|_{\bL^{p}(B_{\rho/2}(z)\cap\Dom)}+C\big\{\|g\|_{\bW^{1,p}(B_{\rho/2}(z)\cap\Dom;\ell^{2})}+\rho^{-2}\|u\|_{\bW^{1,p}(B_{\rho/2}(z)\cap\Dom)}\big\}
\end{aligned}
\]
with $C=C(K,n,p,K_{0},\rho_{0},L)$ independent of $\rho$, where
$\tilde{\cm}(\cdot)$ and $\tilde{L}$ are taken from Lemma \ref{lem:tilde-a-sigma}.
It remains to check $\hat{g}^{z}\in\bW_{\circ}^{1,p}(\R_{+}^{n})$.
This immediately follows from some basic facts in real analysis:
\begin{lem}
Let $h$ and $\vf$ be functions defined $\R_{+}^{n}$. Then we have
\begin{enumerate}
\item[\emph{(a)}]  if $h\in W_{\circ}^{1,p}(\R_{+}^{n})$ and $\vf\in C^{0,1}(\bar{\R}_{+}^{n})$,
then $\vf h\in W_{\circ}^{1,p}(\R_{+}^{n})$;
\item[\emph{(b)}] if $h\in W^{1,p}(\R_{+}^{n})$ and $\vf\in C_{0}^{0,1}(\bar{\R}_{+}^{n})$
, then $\vf h\in W_{\circ}^{1,p}(\R_{+}^{n})$;
\item[\emph{(c)}] if $h\in W_{\circ}^{2,p}(\R_{+}^{n})$, then $h_{x^{i}}\in W_{\circ}^{1,p}(\R_{+}^{n})$
for $i=2,\dots,n$,
\end{enumerate}
where $C^{0,1}(\bar{\R}_{+}^{n})$ is the space of all uniformly Lipschitz
continuous functions defined on $\bar{\R}_{+}^{n}$, and its subset
$C_{0}^{0,1}(\bar{\R}_{+}^{n})$ collects those functions that vanish
on the boundary $\{x^{1}=0\}$. 
\end{lem}

Now we use the above lemma to verify $\hat{g}^{z}\in\bW_{\circ}^{1,p}(\R_{+}^{n})$.
By the assertion (a), it is easily seen the last two terms in the
expression (\ref{eq:pr010}) of $\hat{g}^{z}$ belong to $\bW_{\circ}^{1,p}(\R_{+}^{n};\ell^{2})$.
Assumption \ref{assu:continuity} and the condition $\tilde{\sigma}^{1\cdot}(t,0,y')=0$
checked above imply that $\tilde{\sigma}^{1\cdot}(t,\cdot)\in C_{0}^{0,1}(\bar{\R}_{+}^{n};\ell^{2})$
uniformly with respect to $(t,\omega)$, which along with $\tilde{u}_{y^{1}}^{z}\in\bW^{1,p}(\R_{+}^{n})$
yields $[\tilde{\sigma}^{1\cdot}-\tilde{\sigma}^{1\cdot}(\cdot,0)]\tilde{u}_{1}^{z}\in\bW_{\circ}^{1,p}(\R_{+}^{n};\ell^{2})$
by means of the assertion (b). Moreover, because $\tilde{u}^{z}\in\bW_{\circ}^{2,p}(\R_{+}^{n};\ell^{2})$
and $\tilde{\sigma}^{i\cdot}(t,\cdot)\in C^{0,1}(\bar{\R}_{+}^{n};\ell^{2})$,
it follows from the assertion (c) that $[\tilde{\sigma}^{i\cdot}-\tilde{\sigma}^{i\cdot}(\cdot,0)]\tilde{u}_{i}^{z}\in\bW_{\circ}^{1,p}(\R_{+}^{n};\ell^{2})$
for $i=2,\dots,n$. Therefore, we have $\hat{g}^{z}\in\bW_{\circ}^{1,p}(\R_{+}^{n})$.

The facts (\ref{eq:pf-sigma}) and (\ref{eq:pf-fg}) along with Lemma
\ref{lem:tilde-a-sigma} ensure us to apply Proposition \ref{prop:apriori}
to Eq. (\ref{eq:tilde-uz}) to get the estimate
\[
\begin{aligned}\|u\|_{\bW^{2,p}(B_{\rho/4}(z)\cap\Dom)} & \le\|u^{z}\|_{\bW^{2,p}(B_{\rho/2}(z)\cap\Dom)}\le C\|\tilde{u}^{z}\|_{\bW^{2,p}(\R_{+}^{n})}\\
 & \le C\bigl(\|\hat{f}^{z}\|_{\bL^{p}(\R_{+}^{n})}+\|\hat{g}^{z}\|_{\bW^{1,p}(\R_{+}^{n};\ell^{2})}\bigr)\\
 & \le C(\tilde{\cm}(\rho)+\tilde{L}\rho)\|u_{xx}\|_{\bL^{p}(B_{\rho/2}(z)\cap\Dom)}+C\rho^{-2}\|u\|_{\bW^{1,p}(B_{\rho/2}(z)\cap\Dom)}\\
 & \quad+C\big\{\|f\|_{\bL^{p}(B_{\rho/2}(z)\cap\Dom)}+\|g\|_{\bW^{1,p}(B_{\rho/2}(z)\cap\Dom;\ell^{2})}\big\},
\end{aligned}
\]
where $C=C(\kappa,K,n,p,T,K_{0},\rho_{0},L)$. By interpolation, we
have
\begin{align*}
\|u_{x}\|_{\bL^{p}(B_{\rho/2}(z)\cap\Dom)} & \le C(n)\|u_{xx}\|_{\bL^{p}(B_{\rho/2}(z)\cap\Dom)}^{1/2}\|u\|_{\bL^{p}(B_{\rho/2}(z)\cap\Dom)}^{1/2}\\
 & \le\rho^{3}\|u_{xx}\|_{\bL^{p}(B_{\rho/2}(z)\cap\Dom)}^{1/2}+C(n)\rho^{-3}\|u\|_{\bL^{p}(B_{\rho/2}(z)\cap\Dom)}^{1/2}.
\end{align*}
Combining the last two inequalities, we obtain
\begin{equation}
\begin{aligned}\|u\|_{\bW^{2,p}(B_{\rho/4}(z)\cap\Dom)} & \le C(\tilde{\cm}(\rho)+\tilde{L}\rho)\|u_{xx}\|_{\bL^{p}(B_{\rho/2}(z)\cap\Dom)}+C\rho^{-5}\|u\|_{\bL^{p}(B_{\rho/2}(z)\cap\Dom)}\\
 & \quad+C\big\{\|f\|_{\bL^{p}(B_{\rho/2}(z)\cap\Dom)}+\|g\|_{\bW^{1,p}(B_{\rho/2}(z)\cap\Dom;\ell^{2})}\big\}.
\end{aligned}
\label{eq:pf006}
\end{equation}
Now we define the narrow area near the boundary $\partial G$:
\[
\Dom_{r}=\{x\in\Dom:\text{there is an }\bar{x}\in\pd\Dom\text{ such that }|x-\bar{x}|<r\}.
\]
\begin{lem}
\label{lem:cover}There exist countable points $z_{1},z_{2},\dots\in\pd\Dom$
satisfying the following properties:
\begin{enumerate}
\item $|z_{i}-z_{j}|\ge\rho/8$ for $i\neq j$, and the whole $\pd\Dom$
is covered by $\cup_{i}B_{\rho/8}(z_{i})$;
\item any $x\in\Dom_{\rho/8}$ lies in at least one $B_{\rho/4}(z_{i})$;
\item any $x\in G_{\rho/2}$ is covered by at most $N(n)$ balls from $\{B_{\rho/2}(z_{i})\}$,
where $N(n)$ is the greatest number of such points in $B_{1}$ that
any two of them are over $1/4$ apart.
\end{enumerate}
\end{lem}

Now we postpone the proof of this lemma to this end of this subsection
and move on the proof of Proposition \ref{prop:apriori}. From this
lemma it follows that
\begin{align*}
\|u\|_{\bW^{2,p}(\Dom_{\rho/8})} & \le\sum_{i}\|u\|_{\bL^{p}(B_{\rho/2}(z_{i})\cap\Dom)}\le\sum_{i}\|u\|_{\bW^{2,p}(B_{\rho/4}(z_{i})\cap\Dom)}\\
 & \le N(n)\|u\|_{\bW^{2,p}(\Dom_{\rho/2})}\le N(n)\|u\|_{\bW^{2,p}(G)},
\end{align*}
which along with the estimate (\ref{eq:pf006}) yields that
\begin{equation}
\|u\|_{\bW^{2,p}(G_{\rho/8})}\le C(\tilde{\cm}(\rho)+\tilde{L}\rho)\|u_{xx}\|_{\bL^{p}(\Dom)}+C\big\{\rho^{-5}\|u\|_{\bL^{p}(\Dom)}+\|f\|_{\bL^{p}(\Dom)}+\|g\|_{\bW^{1,p}(\Dom;\ell^{2})}\big\}\label{eq:pf008}
\end{equation}
with $C=C(\kappa,K,n,p,T,K_{0},\rho_{0},L)$. 

To obtain the estimate in $G^{\rho/8}:=G\backslash G_{\rho/8}$, we
write $\bar{\zeta}(x)=\zeta(4x)$ and define a cut-off function $\zeta_{0}=\bar{\zeta}*\bm{1}_{G^{\rho/8}}$.
For a solution $u\in\mathcal{W}_{\circ}^{2,p}(\Dom)$ of Eq. (\ref{eq:general-dom}),
the function $u^{0}=\zeta_{0}u\in\mathcal{W}_{\circ}^{2,p}(\R^{n})$,
whose support lies in $\bar{G}$, satisfies the following equation
\[
\md u^{0}=(a^{ij}u_{x^{i}x^{j}}^{0}+f^{0})\vd t+(\sigma^{ik}u_{x^{i}}^{0}+g^{0,k})\vd\BM_{t}^{k},\quad u^{0}(0,\cdot)=0,
\]
on $(0,T]\times\R^{n}$, where
\[
f^{0}=\zeta_{0}f-a^{ij}(\zeta_{0})_{x^{i}x^{j}}u-a^{ij}(\zeta_{0})_{x^{i}}u_{x^{j}},\quad g^{0,k}=\zeta_{0}g^{k}-\sigma^{ik}(\zeta_{0})_{x^{i}}u^{0}.
\]
Thanks to the $L^{p}$-theory of SPDEs in the whole space (cf. Theorem
5.1 in \citet{krylov1999analytic}), we have the estimate
\begin{equation}
\begin{aligned}\|u\|_{\bW^{2,p}(G^{\rho/8})} & \le\|u^{0}\|_{\bW^{2,p}(\R^{n})}\le C\big(\|f^{0}\|_{\bL^{p}(\R^{n})}+\|g^{0}\|_{\bW^{1,p}(\R^{n};\ell^{2})}\big)\\
 & \le C\big(\rho^{-2}\|u\|_{\bW^{1,p}(G)}+\|f^{0}\|_{\bL^{p}(\R^{n})}+\|g^{0}\|_{\bW^{1,p}(\R^{n};\ell^{2})}\big)\\
 & \le C\rho\|u_{xx}\|_{\bL^{p}(G)}+C\big(\rho^{-5}\|u\|_{\bL^{p}(G)}+\|f\|_{\bL^{p}(G)}+\|g\|_{\bW^{1,p}(G;\ell^{2})}\big),
\end{aligned}
\label{eq:pf007}
\end{equation}
where $C=C(\kappa,K,n,p,T,L,\cm)$.

Combining the estimates (\ref{eq:pf008}) and (\ref{eq:pf007}), we
can choose a small number $\rho=\rho(\kappa,K,n,p,T,L,\cm)\in(0,\rho_{0}\wedge1]$
such that
\[
\|u\|_{\bW^{2,p}(G)}\le\frac{1}{2}\|u_{xx}\|_{\bL^{p}(G)}+C\big(\|u\|_{\bL^{p}(G)}+\|f\|_{\bL^{p}(G)}+\|g\|_{\bW^{1,p}(G;\ell^{2})}\big),
\]
which yields
\begin{equation}
\|u\|_{\bW^{2,p}(G)}\le C\big(\|u\|_{\bL^{p}(G)}+\|f\|_{\bL^{p}(G)}+\|g\|_{\bW^{1,p}(G;\ell^{2})}\big).\label{eq:pf009}
\end{equation}

It remains to estimate $\|u\|_{\bL^{p}(G)}$. Applying It\^o's formula
to $\me^{-\lambda t}|u(t,x)|^{p}$ and integrating on $G\times[0,s]\times\PS$,
we have
\begin{equation}
\begin{aligned} & \me^{-\lambda T}\E\|u(T,\cdot)\|_{L^{p}(G)}^{p}+\lambda\E\int_{0}^{T}\me^{-\lambda t}\|u(t,\cdot)\|_{L^{p}(G)}^{p}\vd t\\
 & =p\,\E\int_{0}^{T}\!\!\!\int_{G}\me^{-\lambda t}|u(t,x)|^{p-2}u(t,x)[a^{ij}(t,x)u_{x^{i}x^{j}}(t,x)+f(t,x)]\vd x\md t\\
 & \quad+\frac{1}{2}p(p-1)\E\int_{0}^{T}\!\!\!\int_{G}\me^{-\lambda t}|u(t,x)|^{p-2}\|\sigma^{i\cdot}(t,x)u_{x^{i}}(t,x)+g(t,x)\|_{\ell^{2}}^{2}\vd x\md t\\
 & \le\eps\,\E\int_{0}^{T}\|u_{xx}(t,\cdot)\|_{L^{p}(G)}^{p}\vd t+C(\eps,p,K,T)\E\int_{0}^{T}\me^{-\lambda t}\|u(t,\cdot)\|_{L^{p}(G)}^{p}\vd t\\
 & \quad+C(p,T)\big(\|f\|_{\bL^{p}(G)}^{p}+\|g\|_{\bL^{p}(G;\ell^{2})}^{p}\big).
\end{aligned}
\label{eq:pf016}
\end{equation}
Letting $\lambda=1+C(\eps,p,K,T)$, one can get that
\begin{equation}
\|u\|_{\bL^{p}(G)}^{p}\le\eps C(p,K,T)\|u_{xx}\|_{\bL^{p}(G)}^{p}+C(\eps,p,K,T)\big(\|f\|_{\bL^{p}(G)}^{p}+\|g\|_{\bL^{p}(G;\ell^{2})}^{p}\big).\label{eq:pf017}
\end{equation}
Selecting $\eps>0$ sufficiently small, the above estimate along with
(\ref{eq:pf009}) yields the desired estimate (\ref{eq:apriori}),
so the proof of Proposition \ref{prop:apriori} is complete. 
\end{proof}
With non-homogeneous initial value condition, we have the following
result.
\begin{cor}
\label{cor:linear}Let $\Dom\in\mathcal{C}^{2}$ and Assumptions \ref{assu:compatib}
and \ref{assu:continuity} be satisfied. Suppose that for any $f\in\bL^{p}(G)$
and $g\in\bW_{\circ}^{1,p}(G)$ there exists a unique solution in
$\mathcal{W}_{\circ}^{2,p}(G)$ to Eq. (\ref{eq:general-dom}) with
zero initial-boundary condition. Then for any given $f\in\bL^{p}(G)$,
$g\in\bW_{\circ}^{1,p}(G)$, and
\[
\begin{gathered}u_{0}(\cdot)\in L^{p}(\PS,\Filt_{0},W_{\circ}^{2-2/p,p}(\Dom)),\end{gathered}
\]
Eq. (\ref{eq:general-dom}) with the initial-boundary condition (\ref{eq:bdcondition})
also admits a unique solution $u\in\mathcal{W}_{\circ}^{2,p}(G)$,
and this solution satisfies
\begin{equation}
\|u\|_{\bW^{2,p}(G)}^{p}\le C\bigl(\|f\|_{\bL^{p}(G)}^{p}+\|g\|_{\bW^{1,p}(G;\ell^{2})}^{p}+\E\|u(0,\cdot)\|_{W^{2-2/p,p}(G)}^{p}\bigr),\label{eq:apriori-2}
\end{equation}
where the constant $C$ depends only on $\kappa,K,n,p,T,K_{0},\rho_{0},L,$
and the functions $\cm(\cdot)$.
\end{cor}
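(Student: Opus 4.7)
The plan is to reduce the non-homogeneous initial data to the zero-initial case by a deterministic (in $\omega$) lift $v$ of $u_{0}$ that respects both the boundary condition on $\partial G$ and the compatibility structure built into Assumption~\ref{assu:compatib}. By the standard parabolic trace theorem (real interpolation), there exists a bounded linear extension operator
\[
\mathcal{E}:W_{\circ}^{2-2/p,p}(G)\to L^{p}((0,T);W_{\circ}^{2,p}(G))\cap W^{1,p}((0,T);L^{p}(G))
\]
with $(\mathcal{E}\phi)(0,\cdot)=\phi$ and operator norm controlled by the $W^{2-2/p,p}$-norm; a concrete realization is $(\mathcal{E}\phi)(t)=\me^{t\lap_{D}}\phi$ for the Dirichlet Laplacian on $G$. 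Setting $v(t,x,\omega):=(\mathcal{E}u_{0}(\omega))(t,x)$ yields a predictable process (since $u_{0}$ is $\Filt_{0}$-measurable) satisfying $v\in\bW_{\circ}^{2,p}(G)$, $\pd_{t}v\in\bL^{p}(G)$, $v(0,\cdot)=u_{0}$, and
\[
\|v\|_{\bW^{2,p}(G)}^{p}+\|\pd_{t}v\|_{\bL^{p}(G)}^{p}\le C\,\E\|u_{0}\|_{W^{2-2/p,p}(G)}^{p}.
\]

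Putting $\tilde{u}:=u-v$, one formally sees that $\tilde{u}$ should solve Eq.~(\ref{eq:general-dom}) with zero initial-boundary data and modified source terms
\[
\tilde{f}:=f+a^{ij}v_{x^{i}x^{j}}-\pd_{t}v,\qquad\tilde{g}^{k}:=g^{k}+\sigma^{ik}v_{x^{i}}.
\]
Clearly $\tilde{f}\in\bL^{p}(G)$ with norm bounded by the right-hand side of (\ref{eq:apriori-2}). The delicate point is verifying $\tilde{g}\in\bW_{\circ}^{1,p}(G;\ell^{2})$: the $W^{1,p}$-regularity is immediate from the Lipschitz-in-$x$ hypothesis on $\sigma$ in Assumption~\ref{assu:continuity} together with $v\in W^{2,p}(G)$, so the crux is the boundary vanishing. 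This is precisely where Assumption~\ref{assu:compatib} enters: since $v(t,\cdot)\in W_{\circ}^{2,p}(G)$ vanishes on $\pd G$, its tangential derivatives vanish on $\pd G$, hence at each $x\in\pd G$ the vector $\nabla v(t,x)$ is parallel to $\bm{n}(x)$, and therefore
\[
\sigma^{ik}(t,x)v_{x^{i}}(t,x)\big|_{\pd G}=\bigl(\bm{n}(x)\cdot\nabla v(t,x)\bigr)\bigl(\bm{n}(x)\cdot\sigma^{\cdot k}(t,x)\bigr)=0
\]
by (\ref{eq:compatib}).

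Applying the existence-uniqueness hypothesis of the corollary to the reduced problem with data $(\tilde{f},\tilde{g})$ then produces a unique $\tilde{u}\in\mathcal{W}_{\circ}^{2,p}(G)$, and $u:=v+\tilde{u}$ is the desired solution of (\ref{eq:general-dom})--(\ref{eq:bdcondition}). Uniqueness of $u$ follows at once: the difference of two solutions lies in $\mathcal{W}_{\circ}^{2,p}(G)$, solves (\ref{eq:general-dom}) with zero data and zero initial-boundary condition, and hence vanishes by the uniqueness assumption. For the estimate, Proposition~\ref{prop:apriori} applied to $\tilde{u}$ yields $\|\tilde{u}\|_{\bW^{2,p}(G)}^{p}\le C(\|\tilde{f}\|_{\bL^{p}(G)}^{p}+\|\tilde{g}\|_{\bW^{1,p}(G;\ell^{2})}^{p})$; controlling $\tilde{f},\tilde{g}$ by $f,g$ and the lift estimate for $v$, then combining with $\|u\|_{\bW^{2,p}(G)}^{p}\le 2^{p-1}(\|\tilde{u}\|_{\bW^{2,p}(G)}^{p}+\|v\|_{\bW^{2,p}(G)}^{p})$, gives (\ref{eq:apriori-2}). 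The main conceptual obstacle is the compatibility check for $\tilde{g}$ on $\pd G$, which is exactly where the structural hypothesis (\ref{eq:compatib}) does its work; without it the lift $v$ would fail to reduce the problem to one with admissible data, and the entire argument would not close.
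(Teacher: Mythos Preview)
Your proposal is correct and follows essentially the same route as the paper: the paper also lifts $u_{0}$ via the Dirichlet heat flow $\partial_{t}V=\Delta V$, $V|_{\partial G}=0$, $V(0,\cdot)=u_{0}$ (your $\mathcal{E}u_{0}=\me^{t\Delta_{D}}u_{0}$), and then solves for $U=u-V$ with zero initial-boundary data and the modified sources $f+(a^{ij}-\delta^{ij})V_{x^{i}x^{j}}$, $g^{k}+\sigma^{ik}V_{x^{i}}$, which after using $\partial_{t}V=\Delta V$ are exactly your $\tilde f,\tilde g$. Your explicit verification that $\sigma^{ik}v_{x^{i}}$ vanishes on $\partial G$ via Assumption~\ref{assu:compatib} is a useful point that the paper leaves implicit.
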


\begin{proof}
From Theorem IV.9.1 in \citet{ladyzhenskaya1988linear}, the heat
equation
\[
\partial_{t}V=\lap V\ \text{ on }(0,T]\times G;\quad V(t,\cdot)|_{\partial G}=0;\quad V(0,\cdot)=u(0,\cdot)\ \text{ on }G
\]
has a unique strong solution $V(\cdot,\cdot,\omega)\in L^{p}((0,T),W_{\circ}^{2,p}(G))$
for each $\omega$, and
\begin{equation}
\|V\|\le C(n,p,K_{0},\rho_{0},T)\E\|u(0,\cdot)\|_{W^{2-2/p,p}(G)}^{p}.\label{eq:pf012}
\end{equation}
On the other hand, from the assumptions the following equation
\begin{equation}
\begin{aligned}\md U & =[a^{ij}U_{x^{i}x^{j}}+f+(a^{ij}-\delta^{ij})V_{x^{i}x^{j}}]\vd t+(\sigma^{ik}U_{x^{i}}+g^{k}+\sigma^{ik}V_{x^{i}})\vd\BM_{t}^{k},\\
U|_{\partial G} & =U(0,\cdot)=0
\end{aligned}
\label{eq:pf013}
\end{equation}
has a unique solution $U\in\mathcal{W}_{\circ}^{2,p}(G)$, and from
Proposition \ref{prop:apriori} we have
\begin{align*}
\|U\|_{\bW^{2,p}(G)}^{p} & \le C(\|f+(a^{ij}-\delta^{ij})V_{x^{i}x^{j}}\|_{\bL^{p}(G)}^{p}+\|g+\sigma^{i\cdot}V_{x^{i}}\|_{\bW^{1,p}(G;\ell^{2})}^{p})\\
 & \le C\bigl(\|f\|_{\bL^{p}(G)}^{p}+\|g\|_{\bW^{1,p}(G;\ell^{2})}^{p}+\|V\|_{\bW^{2,p}(G)}^{p}\bigr)\\
 & \le C\bigl(\|f\|_{\bL^{p}(G)}^{p}+\|g\|_{\bW^{1,p}(G;\ell^{2})}^{p}+\E\|u(0,\cdot)\|_{W^{2-2/p,p}(G)}^{p}\bigr).
\end{align*}
Obviously, the function $u=U+V\in\mathcal{W}_{\circ}^{2,p}(G)$ solves
Eq. (\ref{eq:general-dom}) with condition (\ref{eq:bdcondition}),
and (\ref{eq:apriori-2}) immediately follows from the above estimates
for $U$ and $V$. The uniqueness also holds true, otherwise we can
construct different solutions of (\ref{eq:pf013}) from different
solutions of Eq. (\ref{eq:general-dom}) with (\ref{eq:bdcondition})
(with the help of $V$), which contradicts to the assumptions. The
proof is complete.
\end{proof}
\begin{proof}[Proof of Lemma \ref{lem:cover}]
 For convenience, we say $\{z_{1},z_{2},\dots\}$ is a proper $\rho/8$-covering
set of $E\subset\R^{n}$ if $z_{i}\in E$, $|z_{i}-z_{j}|\ge\rho/8$
for $i\neq j$, and $E$ is covered by $\cup_{i}B_{\rho/8}(z_{i})$. 

If $G$ is bounded, then $\partial G$ is a compact subset in $\R^{n}$,
so there are finite points $\{z_{1},\dots,z_{N}\}\subset\partial G$
such that $\partial G\subset\cup_{i}B_{\rho/8}(z_{i})$, but it is
not necessary that $|z_{i}-z_{j}|\ge\rho/8$ for any $i\neq j$. Now
we adjust the choice of points $z_{i}$ as follows: in the $i$-th
step, we check whether $B_{\rho/8}(z_{i})\subset\cup_{j\neq i}B_{\rho/8}(z_{j})$:
if yes, then remove this $z_{i}$ from the set; if no, then $E_{i}:=\partial G\,\backslash\!\cup_{j\neq i}B_{\rho/8}(z_{j})$
is nonempty and covered by $B_{\rho/8}(z_{i})$, so we can pick one
point $z'_{i}\in E_{i}$ or two $z_{i}',z_{i}''\in E_{i}$ with $|z_{i}'-z''_{i}|\ge\rho/8$
such that $E_{i}$ is covered by $B_{\rho/8}(z_{i}')$ or $B_{\rho/8}(z_{i}')\cup B_{\rho/8}(z_{i}'')$,
and replace $z_{i}$ by $z_{i}'$ or the pair $(z_{i}',z_{i}'')$.
After $N$ steps one obtains a finite proper $\rho/8$-covering set
of $\partial G$.

If $G$ is unbounded, we fix a large number $R>0$ and denote $\varGamma_{k}=\partial G\cap B_{kR}(0)$.
Repeating the argument as above one can find a sequence of finite
sets $A_{1},A_{2},\dots$ inductively such that $A_{1}$ is a finite
proper $\rho/8$-covering set of $\varGamma_{1}$, and $A_{k}$ with
$k\ge2$ is a finite proper $\rho/8$-covering set of $\varGamma_{k}\backslash D_{k-1}$,
where $D_{k-1}=\cup\{B_{\rho/8}(z):z\in\cup_{i=1}^{k-1}A_{1}\}$.
It is easily seen that $A:=\cup_{i=1}^{\infty}A_{i}$ is a finite
proper $\rho/8$-covering set of $\partial G$. 

Next we prove that the set $A$ has the second property. For $x\in G_{\rho/8}$
there is an $\bar{x}\in\partial G$ such that $|x-\bar{x}|<\rho/8$.
Meanwhile, there is a point $z\in A$ such that $\bar{x}\in B_{\rho/8}(z)$.
Hence, $|x-z|\le|x-\bar{x}|+|\bar{x}-z|\le\rho/4$, which means $x\in B_{\rho/4}(z)$.
Finally, for $x\in G_{\rho/2}$ the ball $B_{\rho/2}(x)$ contains
at most $N(n)$ points from the set $A$ according to the definition
of $N(n)$, which implies the last property. The proof is complete.
\end{proof}

\subsection{Existence and uniqueness}

We start from the solvability of stochastic heat equations. In view
of Corollary \ref{cor:linear}, we can just focus on the homogeneous
Dirichlet boundary value problem.
\begin{lem}
Let $G\in\mathcal{C}^{2}$. Then for given $f\in\bL^{p}(G)$ and $g\in\bW_{\circ}^{1,p}(G)$,
the equation 
\begin{equation}
\md u=(\lap u+f)\vd t+g^{k}\vd\BM_{t}^{k}\quad\text{on }(0,T]\times G\label{eq:lap-dom}
\end{equation}
with zero initial-boundary condition has a unique solution $u\in\mathcal{W}_{\circ}^{2,p}(G)$.
\end{lem}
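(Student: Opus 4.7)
Uniqueness follows immediately from Proposition~\ref{prop:apriori}: since $\sigma \equiv 0$ in \eqref{eq:lap-dom}, Assumption~\ref{assu:compatib} holds trivially, so the a priori estimate applied to the difference of any two $\mathcal{W}_{\circ}^{2,p}(G)$-solutions forces that difference to have vanishing $\bW^{2,p}(G)$-norm.

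For existence, my plan is a three-step decomposition that reduces the problem to the whole-space theory of \citet{krylov1999analytic} and the classical deterministic $L^{p}$-theory, followed by a density argument anchored by Proposition~\ref{prop:apriori}. First, I would approximate $g \in \bW_{\circ}^{1,p}(G;\ell^{2})$ by a sequence $g_{n}$ with only finitely many nonzero components, each smooth and compactly supported strictly inside $G$, so that $g_{n} \to g$ in $\bW^{1,p}(G;\ell^{2})$. Extending $g_{n}$ by zero to $\R^{n}$ and applying Theorem~5.1 of \citet{krylov1999analytic} produces a unique $V_{n} \in \mathcal{W}_{\circ}^{2,p}(\R^{n})$ solving
\[
\md V_{n} = \lap V_{n}\vd t + \tilde{g}_{n}^{k}\vd\BM_{t}^{k},\qquad V_{n}(0,\cdot) = 0
\]
on the whole space. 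Because $\tilde{g}_{n}$ vanishes in a neighborhood of $\partial G$, the whole-space heat-kernel representation renders $V_{n}(t,\cdot)$ smooth in $x$ near $\partial G$, so its trace $\phi_{n} := V_{n}|_{\partial G}$ is as regular in $(t,x)$ as is needed for the next step. For each $\omega$ I would then solve pathwise the deterministic Dirichlet problem
\[
\partial_{t} W_{n} - \lap W_{n} = f,\qquad W_{n}(0,\cdot) = 0,\qquad W_{n}|_{\partial G} = -\phi_{n},
\]
using the classical $L^{p}$-theory for parabolic PDEs on $\mathcal{C}^{2}$ domains (Theorem~IV.9.1 of \citet{ladyzhenskaya1988linear}); pathwise uniqueness yields a $\Pred$-predictable $W_{n}$ with pathwise regularity in $L^{p}((0,T), W^{2,p}(G))$. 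The sum $u_{n} := V_{n}|_{G} + W_{n}$ then has vanishing trace on $\partial G$, lies in $\mathcal{W}_{\circ}^{2,p}(G)$, and solves \eqref{eq:lap-dom} with data $(f, g_{n})$. Proposition~\ref{prop:apriori} applied to $u_{n} - u_{m}$ finally gives
\[
\|u_{n} - u_{m}\|_{\bW^{2,p}(G)} \le C\,\|g_{n} - g_{m}\|_{\bW^{1,p}(G;\ell^{2})} \longrightarrow 0,
\]
so $\{u_{n}\}$ is Cauchy and its limit $u \in \mathcal{W}_{\circ}^{2,p}(G)$ is the required solution.

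The main technical hurdle is the boundary-correction step: one must check that $\phi_{n}$ has enough space-time regularity to serve as admissible Dirichlet data for the problem defining $W_{n}$, and that the pathwise construction of $W_{n}$ is jointly $\Pred$-predictable in $(t,\omega)$. The support condition on $\tilde{g}_{n}$ away from $\partial G$ supplies $C^{\infty}$-smoothness of $\phi_{n}$ in $x$ via the explicit heat kernel; time regularity of $\phi_{n}$ follows from the mild-solution formula for $V_{n}$ together with standard moment estimates for the stochastic convolution; and predictability of $W_{n}$ is inherited from that of $\phi_{n}$ through the pathwise uniqueness of the deterministic boundary-value problem.
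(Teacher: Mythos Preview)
Your uniqueness argument and the overall scaffolding---approximate $g$ by nice functions, solve for each approximant, then pass to the limit via the a~priori estimate of Proposition~\ref{prop:apriori}---coincide with the paper's. The decomposition you use for each approximant, however, is more elaborate than the paper's and creates a step you have not fully justified.

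For smooth $g$ supported away from $\partial G$ the paper simply sets
\[
V(t,x)=\int_{0}^{t} g^{k}(s,x)\vd\BM_{s}^{k},
\]
with \emph{no} Laplacian in the drift. Because $g$ vanishes near $\partial G$ and near infinity, so does $V$; hence $V\in\mathcal{W}_{\circ}^{2,p}(G)$ automatically, and $U:=u-V$ must solve the deterministic problem $\partial_{t}U=\Delta U+f+\Delta V$ with \emph{homogeneous} Dirichlet data. No boundary correction is ever needed.

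Your $V_{n}$ instead solves the stochastic heat equation on $\R^{n}$ and therefore spreads across $\partial G$: the trace $\phi_{n}=V_{n}|_{\partial G}$ is nonzero and has to be fed as inhomogeneous Dirichlet data into the equation for $W_{n}$. For that problem to return $W_{n}\in L^{p}((0,T),W^{2,p}(G))$, the classical theory you invoke requires $\phi_{n}$ to lie in the parabolic trace space, i.e.\ to possess roughly $1-\tfrac{1}{2p}>\tfrac{1}{2}$ fractional time regularity. ``Standard moment estimates for the stochastic convolution'' deliver only $W_{p}^{\alpha}$ in time with $\alpha<\tfrac{1}{2}$, which is not enough. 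What actually saves the argument is that $g_{n}$ vanishes near $\partial G$, so there $V_{n}$ satisfies the \emph{deterministic} equation $\partial_{t}V_{n}=\Delta V_{n}$; consequently $\partial_{t}V_{n}\in\bL^{p}$ in a neighbourhood of $\partial G$ and the parabolic trace theorem then gives the required regularity of $\phi_{n}$. With this observation your route can be completed, but it is precisely the detour the paper's driftless choice of $V$ was designed to avoid.
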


\begin{proof}
The uniqueness follows from the estimate (\ref{eq:apriori}). For
the existence we adopt an approximation strategy from the proof of
Theorem 2.9 in \citet{kim2004stochastic}. It is well-known that $C_{0}^{\infty}(G)$
is a dense subset of $W_{\circ}^{1,p}(G)$. We can approximate $g=(g^{1},g^{2},\dots)\in\bW_{\circ}^{1,p}(G;\ell^{2})$
with functions having only finite nonzero entries, bounded on $[0,T]\times G\times\PS$
along with each derivative of any order, and vanishing near $\pd G$
and the infinity (cf. Theorem 3.17 in \citet{adams2003sobolev}).
In this case it is known that
\[
V(t,x)=\int_{0}^{t}g^{k}(t,x)\vd\BM_{s}^{k}
\]
is infinitely differentiable in $x$ and vanishes near $\pd G$ and
the infinity. So we conclude that $V\in\mathcal{W}_{\circ}^{2,p}(G)$.
Again, from PDE theory, the equation
\[
\partial_{t}U=\lap U+f+\lap V,\quad U|_{\pd G}=0,\quad U(0,\cdot)=0
\]
has a solution $U$ in $\mathcal{W}_{\circ}^{2,p}(G)$. The solution
of (\ref{eq:lap-dom}) is then given by $u=U+V\in\mathcal{W}_{\circ}^{2,p}(G)$.
The case of general $g$ can be obtained by approximation by the help
of the estimate (\ref{eq:apriori}). The proof is complete.
\end{proof}
With the solvability of stochastic heat equation (\ref{eq:lap-dom})
and the a priori estimate (\ref{eq:apriori}) in hand, the existence
and uniqueness of solutions to the general linear equation (\ref{eq:general-dom})
immediately follows from the standard method of continuity (cf. \citet[Theorem 5.2]{gilbarg2001elliptic}).
Bearing in mind Corollary \ref{cor:linear}, we have the following
result.
\begin{cor}
\label{cor:nohomog}Let $G\in\mathcal{C}^{2}$ and Assumptions \ref{assu:compatib}
and \ref{assu:continuity}. Then for any given $f\in\bL^{p}(G)$,
$g\in\bW_{\circ}^{1,p}(G)$ and $u_{0}(\cdot)\in L^{p}(\PS,\Filt_{0},W_{\circ}^{2-2/p,p}(\Dom))$,
Eq. (\ref{eq:general-dom}) with the initial-boundary condition (\ref{eq:bdcondition})
has a unique solution $u\in\mathcal{W}_{\circ}^{2,p}(G)$.
\end{cor}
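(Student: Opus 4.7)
The plan is to reduce to zero initial data via Corollary \ref{cor:linear} and then invoke the method of continuity, using the heat equation solvability just established together with the a priori estimate from Proposition \ref{prop:apriori}. Specifically, by Corollary \ref{cor:linear} it suffices to prove existence and uniqueness in $\mathcal{W}_{\circ}^{2,p}(G)$ for equation (\ref{eq:general-dom}) with zero initial-boundary condition, for every $f \in \bL^{p}(G)$ and $g \in \bW_{\circ}^{1,p}(G)$.

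To this end I would introduce the one-parameter family indexed by $\lambda \in [0,1]$,
\begin{equation*}
\md u = \bigl\{[(1-\lambda)\delta^{ij} + \lambda a^{ij}] u_{x^{i}x^{j}} + f\bigr\}\vd t + \bigl\{\lambda\sigma^{ik} u_{x^{i}} + g^{k}\bigr\}\vd \BM_{t}^{k},
\end{equation*}
again with zero initial-boundary condition. The interpolated coefficients $a_{\lambda}^{ij} = (1-\lambda)\delta^{ij} + \lambda a^{ij}$ and $\sigma_{\lambda}^{ik} = \lambda\sigma^{ik}$ satisfy the strong parabolicity condition (\ref{eq:parabolic}) uniformly in $\lambda$: the upper bound on $a_{\lambda}^{ij}\xi^{i}\xi^{j}$ is clear, and for the lower bound one combines $2\lambda a^{ij}\xi^{i}\xi^{j} \ge \lambda\kappa|\xi|^{2} + \lambda\sigma^{ik}\sigma^{jk}\xi^{i}\xi^{j}$ with $2(1-\lambda)|\xi|^{2} \ge 0$ and $\lambda \ge \lambda^{2}$ to absorb $\sigma_{\lambda}^{ik}\sigma_{\lambda}^{jk}\xi^{i}\xi^{j}$. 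Moreover $\sigma_{\lambda}^{\cdot k}$ inherits the tangency condition (\ref{eq:compatib}) from $\sigma^{\cdot k}$ trivially, and the Lipschitz/continuity bounds in Assumption \ref{assu:continuity} are preserved with the same $\cm(\cdot)$ and $L$.

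At $\lambda = 0$ the equation reduces to the stochastic heat equation (\ref{eq:lap-dom}), which is uniquely solvable by the preceding lemma. At each $\lambda \in [0,1]$ Proposition \ref{prop:apriori} delivers the a priori estimate
\begin{equation*}
\|u\|_{\bW^{2,p}(G)} \le C\bigl(\|f\|_{\bL^{p}(G)} + \|g\|_{\bW^{1,p}(G;\ell^{2})}\bigr)
\end{equation*}
with a constant $C$ independent of $\lambda$, since the constants entering Proposition \ref{prop:apriori} depend only on the parabolicity constants, the modulus $\cm(\cdot)$, and the Lipschitz constant $L$, all of which we have just verified to be $\lambda$-uniform. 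The standard method of continuity (cf.\ Theorem 5.2 in \citet{gilbarg2001elliptic}) then promotes unique solvability from $\lambda = 0$ to $\lambda = 1$, which is exactly (\ref{eq:general-dom}) with zero initial data. Applying Corollary \ref{cor:linear} once more accommodates arbitrary $u_{0} \in L^{p}(\PS,\Filt_{0},W_{\circ}^{2-2/p,p}(G))$ and finishes the proof.

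I do not anticipate a real obstacle here; the only point needing care is the $\lambda$-uniformity of the hypotheses feeding Proposition \ref{prop:apriori}. This boils down to the elementary convex-interpolation check for strong parabolicity sketched above, and it is immediate for the tangency and regularity/Lipschitz conditions.
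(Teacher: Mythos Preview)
Your proposal is correct and follows the same strategy as the paper: reduce to zero initial data via Corollary~\ref{cor:linear}, then run the method of continuity between the stochastic heat equation (solvable by the preceding lemma) and the general equation, using the $\lambda$-uniform a priori estimate from Proposition~\ref{prop:apriori}. The paper states this in one sentence without writing out the interpolation family or the parabolicity check, but your explicit verification of the $\lambda$-uniform hypotheses is exactly the routine computation the method of continuity requires.
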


\begin{proof}[Proof of Theorem \ref{thm:main} (i) and (ii)]
 The argument is similar to the proof of Theorem 6.4 in \citet{krylov1999analytic}.
From Assumption \ref{assu:fg}, we know that for any $v\in\bW_{\circ}^{2,p}(G)$,
\[
f(\cdot,\cdot,v)\in\bL^{p}(G),\quad g(\cdot,\cdot,v)\in\bW_{\circ}^{1,p}(G;\ell^{2}).
\]
So by Corollary \ref{cor:nohomog}, the equation
\[
\md u=(a^{ij}u_{x^{i}x^{j}}+f(t,x,v))\vd t+(\sigma^{ik}u_{x^{i}}+g^{k}(t,x,v))\vd\BM_{t}^{k}
\]
with condition (\ref{eq:bdcondition}) has a unique solution $u\in\mathcal{W}_{\circ}^{2,p}(G)$. 

Define a mapping $\mathcal{T}v=u$. Replacing the terminal time $T$
into any $\tau\le T$, it follows from the estimate (\ref{eq:apriori})
and Assumption \ref{assu:fg} that for $v^{1},v^{2}\in\bW_{\circ}^{2,p}(G)$,
\begin{align*}
\|\mathcal{T}v^{1}-\mathcal{T}v^{2}\|_{\bW^{2,p}(G,\tau)}^{p} & \le C(\|f(\cdot,\cdot,v^{1})-f(\cdot,\cdot,v^{2})\|_{\bL^{p}(G,\tau)}^{p}+\|g(\cdot,\cdot,v^{1})-g(\cdot,\cdot,v^{2})\|_{\bW^{1,p}(G,\tau;\ell^{2})}^{p})\\
 & \le C\eps^{p}\|v^{1}-v^{2}\|_{\bW^{2,p}(G,\tau)}^{p}+CK_{\eps}^{p}\int_{0}^{\tau}\|v^{1}(s)-v^{2}(s)\|_{\bL^{p}(G,s)}^{p}\md s.
\end{align*}
From the computation (\ref{eq:pf016}) (with $s$ instead of $T$)
and Assumption \ref{assu:fg}, we can see that
\[
\E\|\mathcal{T}v^{1}(s)-\mathcal{T}v^{2}(s)\|_{L^{p}(G)}^{p}\le C\|v^{1}-v^{2}\|_{\bW^{2,p}(G,s)}^{p}
\]
with $C$ independent of $s$. Combining the last two inequalities
and letting $C\eps^{p}=1/4$ and , we have
\[
\|\mathcal{T}v^{1}-\mathcal{T}v^{2}\|_{\bW^{2,p}(G,\tau)}^{p}\le\frac{1}{4}\|v^{1}-v^{2}\|_{\bW^{2,p}(G,\tau)}^{p}+C\int_{0}^{\tau}\|v^{1}-v^{2}\|_{\bW^{2,p}(G,s)}^{p}\md s,
\]
Then by induction we can compute that for positive integer $m$,
\begin{align*}
\|\mathcal{T}^{m}v^{1}-\mathcal{T}^{m}v^{2}\|_{\bW^{2,p}(G)}^{p} & \le\|v^{1}-v^{2}\|_{\bW^{2,p}(G)}^{p}\sum_{k=0}^{m}\begin{pmatrix}\alpha\\
k
\end{pmatrix}\frac{4^{k-m}}{k!}(CT)^{k}\\
 & \le2^{-m}\|v^{1}-v^{2}\|_{\bW^{2,p}(G)}^{p}\max_{k\ge0}\frac{(4CT)^{k}}{k!}.
\end{align*}
Choose $m$ sufficiently large so that $\mathcal{T}^{m}$ is a contraction
in $\bW_{\circ}^{2,p}(G)$. Then there is a unique $u\in\bW_{\circ}^{2,p}(G)$
such that $\mathcal{T}^{m}u=u$, and from Corollary \ref{cor:nohomog}
we have $u\in\mathcal{W}_{\circ}^{2,p}(G)$. 

Now we derive the estimate (\ref{eq:main-est}). From Corollary \ref{cor:nohomog}
and Assumption \ref{assu:fg} (with a proper choice of $\eps$), we
can obtain that
\[
\|u\|_{\bW^{2,p}(G)}^{p}\le C\bigl(\|u\|_{\bL^{p}(G)}^{p}+\|f(\cdot,\cdot,0)\|_{\bL^{p}(G)}^{p}+\|g(\cdot,\cdot,0)\|_{\bW^{1,p}(G;\ell^{2})}^{p}+\E\|u_{0}\|_{W^{2-2/p,p}(G)}^{p}\bigr).
\]
The term $\|u\|_{\bL^{p}(G)}^{p}$ can be eliminated just as we got
rid of the same one in (\ref{eq:pf009}). The assertions (i) and (ii)
of Theorem \ref{thm:main} are proved.
\end{proof}
In the proof of Theorem \ref{thm:local} we will need the following
result concerning the existence and uniqueness of $W^{1,p}$-solutions
of SPDEs of divergence form. We keep the formulation as the most compact
form that can be applied comfortably, and leave the general extension
to readers.
\begin{prop}
\label{prop:W1p}Let Assumptions \ref{assu:compatib} and \ref{assu:continuity}
be satisfied with $G=\R_{+}^{n}$, and let $c^{i}\in\bL^{\infty}(\R_{+}^{n})$.
Then for any $f^{0},F\in\bL^{p}(\R_{+}^{n})$ and $g\in\bL^{p}(G;\ell^{2})$,
the equation
\begin{align*}
\md u & =[(a^{ij}u_{x^{i}})_{x^{j}}+f^{0}+c^{i}F_{x^{i}}]\vd t+(\sigma^{ik}u_{x^{i}}+g^{k})\vd\BM_{t}^{k},\\
u|_{\partial G} & =0,\quad u|_{t=0}=0
\end{align*}
has a unique solution $u\in\mathcal{W}_{\circ}^{1,p}(\R_{+}^{n})$,
and
\begin{equation}
\|u\|_{\bW^{1,p}(\R_{+}^{n})}\le C(\|(f^{0},F)\|_{\bL^{p}(\R_{+}^{n})}+\|g\|_{\bL^{p}(\R_{+}^{n};\ell^{2})});\label{eq:divergence}
\end{equation}
where the constant $C$ depends only on $\kappa,K,n,p,T,L,\cm(\cdot)$
and $\|c^{i}\|_{\bL^{\infty}}$.
\end{prop}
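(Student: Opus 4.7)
The plan is to mirror the three-step pattern of Subsections 3.1--3.3 (model equation, a priori estimate, method of continuity) at the $\bW^{1,p}$ divergence-form level, taking Proposition \ref{prop:half-constant}(ii) as the model estimate. The first step is to establish the a priori bound (\ref{eq:divergence}) by localization and freezing of coefficients. Fix $x_0 \in \bar{\R}_+^n$, choose $\zeta \in C_0^\infty(B_\rho(x_0))$, and set $v = \zeta u$. The identity
\[
\zeta (a^{ij} u_{x^i})_{x^j} = (a^{ij}(t,x_0) v_{x^i})_{x^j} + \bigl((a^{ij}(t,x) - a^{ij}(t,x_0)) v_{x^i}\bigr)_{x^j} - (a^{ij}\zeta_{x^i} u)_{x^j} - a^{ij}\zeta_{x^j} u_{x^i},
\]
together with the parallel computation for $\sigma^{ik}u_{x^i}$ and the rewriting $\zeta c^i F_{x^i} = c^i(\zeta F)_{x^i} - c^i \zeta_{x^i} F$, recasts the equation for $v$ as a divergence-form SPDE of the type treated by Proposition \ref{prop:half-constant}(ii), with $x$-independent principal coefficients $a^{ij}(t,x_0), \sigma^{ik}(t,x_0)$ and new source terms $\tilde{f}^0, \tilde{F} \in \bL^p$, $\tilde{g} \in \bL^p(\ell^2)$ whose norms are dominated by
\[
(\cm(\rho) + L\rho)\,\|u_x\|_{\bL^p(B_\rho)} + C\rho^{-1}\bigl(\|u\|_{\bL^p(B_\rho)} + \|F\|_{\bL^p(B_\rho)}\bigr) + C\bigl(\|f^0\|_{\bL^p(B_\rho)} + \|g\|_{\bL^p(B_\rho;\ell^2)}\bigr).
\]

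Two cases then arise for applying the model estimate. When $x_0 \in \partial\R_+^n$ the outward normal is $\pm e_1$, and Assumption \ref{assu:compatib} gives $\sigma^{1k}(t,x_0) = 0$, which is exactly the hypothesis of Proposition \ref{prop:half-constant}(ii); direct application yields $\|v\|_{\bW^{1,p}} \le C(\|(\tilde{f}^0,\tilde{F})\|_{\bL^p} + \|\tilde{g}\|_{\bL^p(\ell^2)})$. When $x_0$ is interior and $\mathrm{supp}(\zeta) \subset \R_+^n$, extending $v$ by zero to $\R^n$ and invoking the analogous whole-space $\bW^{1,p}$ estimate (derivable by the same odd-extension and random-translation argument used in the proof of Proposition \ref{prop:half-constant}) provides the same bound. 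Covering $\bar{\R}_+^n$ by finitely overlapping balls (a variant of Lemma \ref{lem:cover} adapted to the unbounded half-space), summing the local estimates, and picking $\rho$ small enough that $\cm(\rho) + L\rho$ is absorbed into the left-hand side gives $\|u\|_{\bW^{1,p}} \le C(\|u\|_{\bL^p} + \|(f^0,F)\|_{\bL^p} + \|g\|_{\bL^p(\ell^2)})$. The residual $\|u\|_{\bL^p}$ is dispatched by applying It\^o's formula to $\me^{-\lambda t}|u|^p$ and choosing $\lambda$ large, exactly as in (\ref{eq:pf016})--(\ref{eq:pf017}), producing (\ref{eq:divergence}).

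Existence and uniqueness then follow from the method of continuity along the homotopy $a^{ij}_\lambda = (1-\lambda)\delta^{ij} + \lambda a^{ij}$, $\sigma^{ik}_\lambda = \lambda \sigma^{ik}$, $c^i_\lambda = \lambda c^i$: strong parabolicity and (\ref{eq:compatib}) persist along the homotopy, the a priori estimate holds uniformly in $\lambda$, and at $\lambda = 0$ the equation $\md u = (\lap u + f^0)\vd t + g^k\vd\BM_t^k$ is uniquely solvable in $\mathcal{W}_\circ^{1,p}(\R_+^n)$ by a direct application of Proposition \ref{prop:half-constant}(ii); uniqueness at $\lambda = 1$ is a direct consequence of the a priori estimate applied to the difference of two solutions. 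The main technical hurdle is the bookkeeping in the localization, in particular verifying that every new source term falls within what Proposition \ref{prop:half-constant}(ii) ingests: the drift contributions must split cleanly into an $\bL^p$ part and a divergence part of the form $c^i F_{x^i}$, and the stochastic part must remain at the $\bL^p(\ell^2)$ level with no $\bW^{1,p}$ regularity required. The latter is precisely why the Lipschitz bound on $\sigma$ (rather than the weaker modulus of continuity on $a$) enters here, controlling $(\sigma^{ik}(t,x) - \sigma^{ik}(t,x_0)) v_{x^i}$ by $L\rho\|v_x\|_{\bL^p}$, which exactly matches the regularity that the model equation accepts.
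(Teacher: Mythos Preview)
Your proposal is correct and follows essentially the same approach as the paper. The paper's own proof is only a sketch: it states that the a priori estimate (\ref{eq:divergence}) is obtained by repeating the localization argument of Proposition~\ref{prop:apriori} directly on $\R_+^n$ (no boundary-straightening needed), with Proposition~\ref{prop:half-constant}(ii) supplying the model estimate, and that existence and uniqueness then follow from the method of continuity; you have spelled out precisely these steps, including the freezing of coefficients, the two cases (boundary versus interior balls), the covering/absorption argument, and the It\^o-formula elimination of $\|u\|_{\bL^p}$.
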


\begin{proof}
As above the existence and uniqueness of solutions follows from the
(a priori) estimate (\ref{eq:divergence}) by using the method of
continuity and the Banach fixed-point theorem. The proof of (\ref{eq:divergence})
is similar to but much easier than the derivation of estimate (\ref{eq:apriori})
because one needn't straighten the boundary but just do the computation
on the original equation, while the auxiliary estimate for model equations
is provided by Proposition \ref{prop:apriori} (ii). We suppress the
details here to avoid unnecessary repeating.
\end{proof}

\subsection{Embedding for $\mathcal{W}_{\circ}^{2,p}(G)$}

Let us define the following norm for the space $\mathcal{W}_{\circ}^{2,p}(G)$
(recall Definition \ref{def:W2p}):
\[
\|u\|_{\mathcal{W}_{\circ}^{2,p}(G)}=\|u_{xx}\|_{\bL^{p}(G)}+\|u_{{\rm D}}\|_{\bL^{p}(G)}+\|u_{{\rm S}}\|_{\bW^{1,p}(G;\ell^{2})}+(\E\|u_{0}\|_{W^{2-2/p,p}(G)}^{p})^{1/p}.
\]
Following the proof of Theorem 3.7 in \citet{krylov1999analytic},
one can prove that $\mathcal{W}_{\circ}^{2,p}(G)$ is a Banach space
with the above norm. 

The assertion (iii) of Theorem \ref{thm:main} is a direct consequence
of the following lemma.
\begin{lem}
\label{lem:embedding}Let $G\in\mathcal{C}^{2}$ and $p>2$. Then
for $u\in\mathcal{W}_{\circ}^{2,p}(G)$ we have

(a) if $\alpha_{0}:=\frac{2p-n-2}{2p}>0$, then for any $\alpha\in(0,\alpha_{0})$,
\[
\E\|u\|_{C^{\alpha/2,\alpha}([0,T]\times\bar{G})}^{p}\le C(n,p,\alpha,K_{0},\rho_{0},T)\|u\|_{\mathcal{W}_{\circ}^{2,p}(G)}^{p};
\]

(b) if $\beta_{0}:=\frac{p-n-2}{2p}>0$, then for any $\beta\in(0,\beta_{0})$,
\[
\E\|u_{x}\|_{C^{\beta/2,\beta}([0,T]\times\bar{G})}^{p}\le C(n,p,\beta,K_{0},\rho_{0},T)\|u\|_{\mathcal{W}_{\circ}^{2,p}(G)}^{p}.
\]
\end{lem}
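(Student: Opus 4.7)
The plan is to reduce both parts to the whole-space case $G=\R^{n}$ and then invoke (essentially) the parabolic embedding for SPDE function spaces from Krylov's $L^{p}$-theory (see e.g.\ Theorem~7.2 of \citet{krylov1999analytic}). For the reduction I would employ the same localization-and-straightening machinery used in Proposition~\ref{prop:apriori}: taking a locally finite partition of unity subordinate to a covering of $\bar G$ by interior open sets together with the boundary balls $B_{\rho_{0}/2}(z_{i})$ from Lemma~\ref{lem:cover}, I write $u=\sum_{i}\zeta_{i}u$. Interior pieces extend by zero to $\R^{n}$. For a boundary piece, pulling back under the $\mathcal{C}^{2}$-diffeomorphism $\psi_{i}$ produces a function on $\R_{+}^{n}$ that, together with each component $u_{\mathrm{S}}^{k}$, vanishes on $\{y^{1}=0\}$ (both lie in the ``$\circ$'' spaces); odd reflection across this hyperplane then yields functions in $W^{2,p}(\R^{n})$ and $W^{1,p}(\R^{n};\ell^{2})$ respectively, with equivalent norms, while $u_{\mathrm{D}}$ is extended by odd reflection to $L^{p}(\R^{n})$. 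The extended pieces satisfy an SPDE on $\R^{n}$ whose $\mathcal{W}_{\circ}^{2,p}(\R^{n})$-norm is bounded by $C(n,p,K_{0},\rho_{0})\|u\|_{\mathcal{W}_{\circ}^{2,p}(G)}$, so the H\"older norm of $u$ on $[0,T]\times\bar G$ is dominated by that of the extension on $[0,T]\times\R^{n}$.

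On $\R^{n}$ the embedding rests on three ingredients. First, the classical spatial Sobolev embedding $W^{s,p}(\R^{n})\hookrightarrow C^{s-n/p}(\R^{n})$ (for $sp>n$) handles H\"older regularity in $x$ pointwise in $(t,\omega)$. Second, applying the Burkholder--Davis--Gundy inequality to the stochastic integral and H\"older's inequality to the Bochner-integral part of $u(t)-u(s)=\int_{s}^{t}u_{\mathrm{D}}\vd r+\int_{s}^{t}u_{\mathrm{S}}^{k}\vd \BM_{r}^{k}$ yields, for $p\ge 2$,
\[
\E\|u(t,\cdot)-u(s,\cdot)\|_{L^{p}(\R^{n})}^{p}\le C(T)\,|t-s|^{(p-2)/2}\,\|u\|_{\mathcal{W}_{\circ}^{2,p}(\R^{n})}^{p},
\]
together with the analogous estimate for $u_{x}$ that exploits $u_{\mathrm{S}}\in\bW^{1,p}(\R^{n};\ell^{2})$. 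Third, interpolating these $L^{p}$-in-time increment bounds with the uniform-in-$t$ higher-Sobolev bounds, combining with the spatial embedding, and applying Kolmogorov's continuity criterion to the $C^{\gamma}$-valued process $t\mapsto u(t,\cdot)$ (or, equivalently, the Da~Prato--Zabczyk factorization formula) upgrades everything to almost-sure joint H\"older continuity. Optimizing the interpolation parameter reproduces the sharp exponents $\alpha_{0}=1-(n+2)/(2p)$ in part~(a) and $\beta_{0}=1/2-(n+2)/(2p)$ in part~(b); the gap of $1/2$ between the two numerators reflects the one-derivative loss when passing from $u$ to $u_{x}$.

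The main obstacle is part~(b): since $u_{x}$ does not satisfy an SPDE with $L^{p}$-type right-hand side (formally differentiating the equation produces $\partial_{x}u_{\mathrm{D}}$ and $\partial_{x}u_{\mathrm{S}}$, which need not belong to $\bL^{p}$ and $\bW^{1,p}(\R^{n};\ell^{2})$), the time-increment estimate for $u_{x}$ must be derived in dual form. Testing the equation for $u$ against $-\phi_{x^{i}}$ yields
\[
(u_{x^{i}}(t)-u_{x^{i}}(s),\phi)=\int_{s}^{t}(-u_{\mathrm{D}},\phi_{x^{i}})\vd r+\int_{s}^{t}(-u_{\mathrm{S}}^{k},\phi_{x^{i}})\vd \BM_{r}^{k},
\]
and then BDG together with duality gives a $W^{-1,p}$-valued time-increment estimate; interpolating this with $u_{x}\in\bW^{1,p}(\R^{n})$ and feeding into the same Sobolev-plus-Kolmogorov step produces the claim. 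The careful balancing of the stochastic half-order time scale against the spatial Sobolev exponent is the technically delicate point throughout, but it proceeds along the well-established template of Krylov's $L^{p}$-theory.
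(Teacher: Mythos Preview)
Your reduction to the whole-space case is essentially identical to the paper's: localize with cut-offs, straighten the boundary via the $\mathcal{C}^{2}$-diffeomorphisms, and use odd reflection across $\{y^{1}=0\}$ (the paper carries out the odd extension slightly more indirectly, by rewriting the equation as $\md u=(\Delta u+f)\vd t+g^{k}\vd\BM_{t}^{k}$ and invoking its own solvability result on $\R^{n}$, but the effect is the same). Where you diverge is in the treatment of the whole-space case itself: the paper simply cites Theorem~7.2 of \citet{krylov1999analytic} together with the Sobolev embedding $H_{p}^{s}\hookrightarrow C^{s-n/p}$, and this single citation already delivers \emph{both} parts (a) and (b) without any separate argument for $u_{x}$. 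Your ``obstacle'' for part~(b) and the proposed dual/$W^{-1,p}$ workaround are therefore unnecessary here---Krylov's embedding theorem is stated for the full scale $\mathcal{H}_{p}^{n}$ and yields $u\in C^{\gamma/2-1/p}([0,T],H_{p}^{2-\gamma})$ for suitable $\gamma$, from which H\"older continuity of $u_{x}$ follows by choosing $\gamma$ so that $H_{p}^{2-\gamma}\hookrightarrow C^{1+\beta}$. Your hands-on route via BDG, interpolation, and Kolmogorov is essentially a re-derivation of a special case of that theorem; it is correct but adds work the paper avoids.
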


\begin{proof}
When $G=\R^{n}$ this lemma is a simple consequence of Theorem 7.2
in \citet{krylov1999analytic} by means of Sobolev embedding. 

For $G=\R_{+}^{n}$ it suffices to show that the odd extension of
$u$ (see (\ref{eq:odd-ext})) lies in $\mathcal{W}_{\circ}^{2,p}(\R^{n})$.
Indeed, we set $f=u_{{\rm D}}-\lap u\in\bL^{p}(\R_{+}^{n})$ and $g=u_{{\rm S}}\in\bW_{\circ}^{1,p}(\R_{+}^{n};\ell^{2})$,
then
\begin{equation}
\md u=(\lap u+f)\vd t+g^{k}\vd\BM_{t}^{k}.\label{eq:pf019}
\end{equation}
We continue $u_{0}$, $f$ and $g$ to be odd functions of $x^{1}$,
and solve the above equation with initial data $u_{0}$ in the whole
space $\R^{n}$. By our solvability results, the solution of the extended
equation is the odd continuation of $u$ and belongs to $\mathcal{W}_{\circ}^{2,p}(\R^{n})$.

Finally, we consider the case of general $G\in\mathcal{C}^{2}$. For
$u\in\mathcal{W}_{\circ}^{2,p}(G)$, and define $\tilde{u}^{z}$,
$\tilde{u}_{{\rm D}}^{z}$ and $\tilde{u}_{{\rm S}}^{z}$ in the spirit
of (\ref{eq:pr018}) for any $z\in\partial G$. Evidently, $\tilde{u}^{z}\in\mathcal{W}_{\circ}^{2,p}(\R_{+}^{n})$
with $\md\tilde{u}^{z}=\tilde{u}_{{\rm D}}^{z}\vd t+\tilde{u}_{{\rm S}}^{z,k}\vd\BM_{t}^{k}$.
Bearing in mind the assertion for $\R_{+}^{n}$, a direct computation
shows that
\begin{align*}
 & \E\|u\|_{C^{\alpha/2,\alpha}([0,T]\times(\bar{B}_{\rho_{0}/4}(z)\cap\bar{G}))}^{p}\le\E\|\zeta^{z}u\|_{C^{\alpha/2,\alpha}([0,T]\times(\bar{B}_{\rho_{0}/2}(z)\cap\bar{G}))}^{p}\\
 & \le C(n,p,\alpha,K_{0},\rho_{0})\E\|\tilde{u}^{z}\|_{C^{\alpha/2,\alpha}([0,T]\times\bar{\R}_{+}^{n})}^{p}\le C(n,p,\alpha,K_{0},\rho_{0},T)\|\tilde{u}^{z}\|_{\mathcal{W}_{\circ}^{2,p}(\R_{+}^{n})}^{p}\\
 & \le C(n,p,\alpha,K_{0},\rho_{0},T)\|u\|_{\mathcal{W}_{\circ}^{2,p}(G)}^{p},\\
 & \E\|u_{x}\|_{C^{\beta/2,\beta}([0,T]\times(\bar{B}_{\rho_{0}/4}(z)\cap\bar{G}))}^{p}\le\E\|(\zeta^{z}u)_{x}\|_{C^{\beta/2,\beta}([0,T]\times(\bar{B}_{\rho_{0}/2}(z)\cap\bar{G}))}^{p}\\
 & \le C(n,p,\beta,K_{0},\rho_{0})\E\|\partial\tilde{u}^{z}\|_{C^{\beta/2,\beta}([0,T]\times\bar{\R}_{+}^{n})}^{p}\le C(n,p,\beta,K_{0},\rho_{0})\|\tilde{u}^{z}\|_{\mathcal{W}_{\circ}^{2,p}(\R_{+}^{n})}^{p}\\
 & \le C(n,p,\beta,K_{0},\rho_{0})\|u\|_{\mathcal{W}_{\circ}^{2,p}(G)}^{p}.
\end{align*}
For $z\in G^{\rho_{0}/4}=G\backslash G_{\rho_{0}/4}$ the estimate
is much simpler:
\begin{align*}
\E\|u\|_{C^{\alpha/2,\alpha}([0,T]\times\bar{B}_{\rho_{0}/8}(z))}^{p} & \le\E\|\eta^{z}u\|_{C^{\alpha/2,\alpha}([0,T]\times\R^{n})}^{p}\le C\|\eta^{z}u\|_{\mathcal{W}_{\circ}^{2,p}(\R^{n})}^{p}\le C\|u\|_{\mathcal{W}_{\circ}^{2,p}(G)}^{p},\\
\E\|u_{x}\|_{C^{\beta/2,\beta}([0,T]\times\bar{B}_{\rho_{0}/8}(z))}^{p} & \le\E\|(\eta^{z}u)_{x}\|_{C^{\beta/2,\beta}([0,T]\times\R^{n})}^{p}\le C\|\eta^{z}u\|_{\mathcal{W}_{\circ}^{2,p}(\R^{n})}^{p}\le C\|u\|_{\mathcal{W}_{\circ}^{2,p}(G)}^{p},
\end{align*}
where $\eta^{z}\in C_{0}^{\infty}(\R^{n})$ such that $\eta^{z}(x)=1$
for $|x-z|\le\rho_{0}/8$ and $\eta^{z}(x)=0$ for $|x-z|\ge\rho_{0}/4$.
Therefore, we have bounded the H\"oler norms in any $\bar{B}_{\rho_{0}/8}(z)\cap\bar{G}$
with $z\in\bar{G}$. The desired global estimate follows from the
localization property of H\"oler norms (cf. Theorem 4.1.1 in \citet{krylov1996lectures}).
The lemma is proved. 
\end{proof}

\section{Proof of Theorem \ref{thm:local}}

The interior regularity of the solution is implied in the assumption
$u\in\bW_{{\rm loc}}^{2,p}(G)$. To prove the regularity near $\Gamma':=\Gamma\cap\partial G'$,
it suffices to do this in a neighbourhood of any point $z\in\Gamma'$
because $G'$ is bounded (and $\Gamma'$ is bounded too). In other
words, we need prove that $u\in\bW^{2,p}(B_{\eps}(z)\cap G)$, where
$\eps>0$ is a number much smaller than $\mathrm{dist}(G',\partial G\backslash\Gamma)$
and $\rho_{0}$ (recall Definition \ref{def:domain}). In the spirit
of the method of straightening boundary as in the proof of Proposition
\ref{prop:apriori}, the desired result can be converted equivalently
to the following lemma.
\begin{lem}
The conclusion of Theorem \ref{thm:local} holds true for $G=\R_{+}^{n}$,
$\Gamma=\partial\R_{+}^{n}\cap B_{2\eps}(0)$ and $G'=B_{\eps}(0)$. 
\end{lem}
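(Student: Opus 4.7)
The plan is to localize near $\Gamma$ and perform a two-step bootstrap: first upgrade the interior regularity $\bW_{\rm loc}^{2,p}(G)$ to $\bW^{1,p}$ up to $\Gamma$ via the divergence-form theory of Proposition \ref{prop:W1p}, then upgrade to $\bW^{2,p}$ via Theorem \ref{thm:main}. I would introduce nested cutoffs $\zeta_1\in C_0^\infty(B_{7\eps/4}(0))$ with $\zeta_1\equiv1$ on $B_{3\eps/2}(0)$, and $\zeta_2\in C_0^\infty(B_{3\eps/2}(0))$ with $\zeta_2\equiv1$ on $B_\eps(0)$, both supported inside $B_{2\eps}(0)$, so that the cutoffs ``see'' only the portion $\Gamma$ of $\partial\R_+^n$, on which both $\sigma^{1\cdot}=0$ and $g|_\Gamma=0$ hold. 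If necessary, I would redefine $\sigma^{ik}$ outside the cutoff supports so that Assumption \ref{assu:compatib} holds on all of $\partial\R_+^n$; this modification does not alter the equations satisfied by $\zeta_j u$.

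For Step 1, set $w_1:=\zeta_1 u$. Using $|a^{ij}_x|\le L$, recast $a^{ij}u_{x^ix^j}=(a^{ij}u_{x^i})_{x^j}-a^{ij}_{x^j}u_{x^i}$ and apply the It\^o product rule. The $u_{x^i}$-terms multiplied by $\zeta_{1,x^j}$ are further rewritten as divergences via $\zeta_{1,x^j}u_{x^i}=(\zeta_{1,x^j}u)_{x^i}-\zeta_{1,x^jx^i}u$, yielding an SPDE for $w_1$ on $\R_+^n$ with zero Dirichlet data (from $u|_{\partial\R_+^n}=0$, $g|_\Gamma=0$, and $\mathrm{supp}\,\zeta_1\subset B_{2\eps}$) whose free terms decompose into an $\bL^p$-function plus a divergence of an $\bL^p$-function, together with bounded lower-order perturbations in $w_1$ and $w_{1,x}$. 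An extension of Proposition \ref{prop:W1p} accommodating these bounded lower-order perturbations (as left to the reader in the text) delivers a unique $\mathcal{W}_\circ^{1,p}(\R_+^n)$-solution $\tilde w_1$, which I identify with $w_1$ to conclude $u\in\bW^{1,p}(B_{3\eps/2}(0)\cap\R_+^n)$.

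For Step 2, set $w_2:=\zeta_2 u$ and compute $dw_2$ in non-divergence form. The drift free term $\tilde f$ now contains $u$ and $u_x$ multiplied by functions supported in $B_{3\eps/2}$, and hence lies in $\bL^p$ by Step 1; a term-by-term check shows that the diffusion free term $\tilde g^k$ lies in $\bW_\circ^{1,p}(\R_+^n;\ell^2)$ (it vanishes on $\partial\R_+^n$ because $u|_{\partial\R_+^n}=0$, $g|_\Gamma=0$, and $\mathrm{supp}\,\zeta_2\subset B_{2\eps}$). Absorbing the remaining $b^i w_{2,x^i}+cw_2$ and $\nu^k w_2$ contributions into semilinear nonlinearities that satisfy Assumption \ref{assu:fg}, Theorem \ref{thm:main} supplies a unique $\mathcal{W}_\circ^{2,p}(\R_+^n)$-solution $\tilde w_2$; identifying $\tilde w_2=w_2$ yields $u\in\bW^{2,p}(B_\eps(0)\cap\R_+^n)$, and the H\"older bounds follow from Lemma \ref{lem:embedding} applied to $w_2$.

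The principal technical obstacle at both steps is the identification $\tilde w_j=w_j$, since $w_j=\zeta_j u$ is a priori only in a weaker regularity class than $\tilde w_j$. I would argue that $h:=\tilde w_j-w_j$ belongs to $\bL^p(\R_+^n)$ and satisfies the corresponding homogeneous SPDE distributionally (with zero initial data and matching boundary behaviour). Applying It\^o's formula to $\me^{-\lambda t}|h|^p$ on subdomains $\{x^1>\delta\}\cap B_R$, where interior parabolic regularity endows $h$ with the required smoothness, and passing to $\delta\downarrow 0$ and $R\uparrow\infty$ while using $h\in\bL^p(\R_+^n)$ to control the boundary and far-field contributions, as in the derivation of (\ref{eq:pf016}), yields $h\equiv 0$.
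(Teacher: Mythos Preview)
Your proposal is correct and follows essentially the same two-step bootstrap as the paper: a divergence-form cutoff estimate via Proposition~\ref{prop:W1p} to get $\bW^{1,p}$ up to $\Gamma$, then a non-divergence cutoff estimate via Theorem~\ref{thm:main} to reach $\bW^{2,p}$, followed by Lemma~\ref{lem:embedding}. The only differences are cosmetic: the paper places $u$ itself (as a known $\bL^{p}$ function) in the free terms $f^{0},F,\tilde g$ rather than treating the commutator terms as lower-order perturbations in $w_1$, so Proposition~\ref{prop:W1p} applies verbatim without extension; and the paper does not spell out the identification $\tilde w_j=w_j$ that you carefully address---your argument there is a legitimate elaboration of a step the paper takes for granted.
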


\begin{proof}
In view of Corollary \ref{cor:linear} one can assume that $u(0,\cdot)=0$.
Take a function $\zeta\in C_{0}^{\infty}(\R^{n})$ such that $\zeta(x)=1$
for $|x|\le3\eps/2$ and $\zeta(x)=0$ for $|x|\ge2\eps$. Then $v=\zeta u$
satisfies the following equation
\[
\md v=[(a^{ij}v_{x^{i}})_{x^{j}}+f^{0}+c^{i}F_{x^{i}}]\vd t+(\sigma^{ik}v_{x^{i}}+\tilde{g}^{k})\vd\BM_{t}^{k},
\]
where
\begin{align*}
c^{i} & =b^{i}\zeta-2a^{ij}\zeta_{x^{j}}-a_{x^{j}}^{ij}\zeta,\quad F=u\\
f^{0} & =\zeta f+(c\zeta-a^{ij}\zeta_{x^{i}x^{j}}-a_{x^{j}}^{ij}\zeta_{x^{i}})u,\\
\tilde{g}^{k} & =\zeta g^{k}+(\nu^{k}\zeta-\sigma^{ik}\zeta_{x^{i}})u.
\end{align*}
From Proposition \ref{prop:W1p} one has $\zeta u=v\in\mathcal{W}_{\circ}^{1,p}(\R_{+}^{n})$
and 
\[
\|u\|_{\bW^{1,p}(B_{3\eps/2})}\le C(\|u\|_{\bL^{p}(\R_{+}^{n})}+\|f\|_{\bL^{p}(\R_{+}^{n})}+\|g\|_{\bL^{p}(\R_{+}^{n};\ell^{2})}).
\]
Now we let $\tilde{\zeta}\in C_{0}^{\infty}(\R^{n})$ such that $\tilde{\zeta}(x)=1$
for $|x|\le\eps$ and $\tilde{\zeta}(x)=0$ for $|x|\ge3\eps/2$.
Then $\tilde{v}=\tilde{\zeta}u$ satisfies
\[
\md\tilde{v}=(a^{ij}\tilde{v}+\tilde{f})\vd t+(\sigma^{ik}\tilde{v}+\tilde{g}^{k})\vd\BM_{t}^{k},
\]
where $\tilde{g}$ is defined above and
\[
\tilde{f}=\tilde{\zeta}f+(b\tilde{\zeta}-a^{ij}\tilde{\zeta}_{x^{j}})u_{x}+(c\tilde{\zeta}-a^{ij}\tilde{\zeta}_{x^{i}x^{j}})u.
\]
Since $u\in\bW^{1,p}(B_{3\eps/2}\cap\R_{+}^{n})$ and $u=0$ on $B_{3\eps/2}\cap\partial\R_{+}^{n}$,
one has $\tilde{f}\in\bL^{p}(\R_{+}^{n})$ and $\tilde{g}\in\bW_{\circ}^{1,p}(\R_{+}^{n})$.
Then from Theorem \ref{thm:main} one obtains $\tilde{\zeta}u=\tilde{v}\in\mathcal{W}_{\circ}^{2,p}(\R_{+}^{n})$.
The continuity property of $\tilde{\zeta}u$ and its derivatives follows
from Lemma \ref{lem:embedding}. The proof is complete.
\end{proof}

\section*{References}

\bibliographystyle{authordate3}
\bibliography{ref}

\begin{thebibliography}{}

\bibitem[\protect\citename{Adams \& Fournier, }2003]{adams2003sobolev}
{\sc Adams, R.~A., \& Fournier, J.} 2003.
\newblock {\em {Sobolev Spaces}}.
\newblock  Vol. 140.
\newblock Academic press.

\bibitem[\protect\citename{Brzeniak, }1995]{Brzeniak1995Stochastic}
{\sc Brzeniak, Z.} 1995.
\newblock Stochastic partial differential equations in M-type 2 Banach spaces.
\newblock {\em Potential Analysis}, {\bf 4}(1), 1--45.

\bibitem[\protect\citename{Da~Prato \& Zabczyk, }2014]{da2014stochastic}
{\sc Da~Prato, G., \& Zabczyk, J.} 2014.
\newblock {\em Stochastic equations in infinite dimensions}.
\newblock Cambridge university press.

\bibitem[\protect\citename{Debussche {\em et~al.\ }\relax,
  }2015]{debussche2015re}
{\sc Debussche, A., de~Moor, S., \& Hofmanov{\'a}, M.} 2015.
\newblock A regularity result for quasilinear stochastic partial differential
  equations of parabolic type.
\newblock {\em SIAM Journal on Mathematical Analysis}, {\bf 47}(2), 1590--1614.

\bibitem[\protect\citename{Denis {\em et~al.\ }\relax, }2005]{denis2005p}
{\sc Denis, L., Matoussi, A., \& Stoica, L.} 2005.
\newblock {$L^p$ estimates for the uniform norm of solutions of quasilinear
  SPDE's}.
\newblock {\em {Probab. Theory Related Fields}}, {\bf 133}(4), 437--463.

\bibitem[\protect\citename{Du \& Liu, }n.d.]{du2015cauchy}
{\sc Du, K., \& Liu, J.}
\newblock {On the Cauchy problem for stochastic parabolic equations in H\"older
  spaces}.
\newblock {\em Transactions of the American Mathematical Society},  To appear.

\bibitem[\protect\citename{Flandoli, }1990]{flandoli1990dirichlet}
{\sc Flandoli, Franco}. 1990.
\newblock Dirichlet boundary value problem for stochastic parabolic equations:
  compatibility relations and regularity of solutions.
\newblock {\em Stochastics: An International Journal of Probability and
  Stochastic Processes}, {\bf 29}(3), 331--357.

\bibitem[\protect\citename{Gerencs{\'e}r, }2017]{Gerencs2017Boundary}
{\sc Gerencs{\'e}r, M.} 2017.
\newblock Boundary regularity of stochastic PDEs.
\newblock {\em arxiv.org/pdf/1705.05364.pdf}.

\bibitem[\protect\citename{Gilbarg \& Trudinger, }2001]{gilbarg2001elliptic}
{\sc Gilbarg, D., \& Trudinger, N.~S.} 2001.
\newblock {\em {Elliptic partial differential equations of second order}}.
\newblock {Classics in Mathematics}.
\newblock {Springer-Verlag, Berlin}.

\bibitem[\protect\citename{Kim, }2004a]{kim2004p}
{\sc Kim, K.~H.} 2004a.
\newblock {On $L_p$-theory of stochastic partial differential equations of
  divergence form in $C^1$ domains}.
\newblock {\em Probability Theory and Related Fields}, {\bf 130}(4), 473--492.

\bibitem[\protect\citename{Kim, }2004b]{kim2004stochastic}
{\sc Kim, K.~H.} 2004b.
\newblock {On stochastic partial differential equations with variable
  coefficients in $C^1$ domains}.
\newblock {\em Stochastic Processes and their Applications}, {\bf 112}(2),
  261--283.

\bibitem[\protect\citename{Krylov, }1994]{krylov1994aw}
{\sc Krylov, N.~V.} 1994.
\newblock {A {$W_2^n$}-theory of the {D}irichlet problem for {SPDE}s in general
  smooth domains}.
\newblock {\em {Probab. Theory Related Fields}}, {\bf 98}(3), 389--421.

\bibitem[\protect\citename{Krylov, }1996a]{krylov1996lectures}
{\sc Krylov, N.~V.} 1996a.
\newblock {\em {Lectures on elliptic and parabolic equations in {H}{\"o}lder
  spaces}}.
\newblock Graduate Studies in Mathematics, vol. 12.
\newblock American Mathematical Society, Providence, RI.

\bibitem[\protect\citename{Krylov, }1996b]{krylov1996l_p}
{\sc Krylov, N.~V.} 1996b.
\newblock {On {$L_p$}-theory of stochastic partial differential equations in
  the whole space}.
\newblock {\em {SIAM J. Math. Anal.}}, {\bf 27}(2), 313--340.

\bibitem[\protect\citename{Krylov, }1999]{krylov1999analytic}
{\sc Krylov, N.~V.} 1999.
\newblock {An analytic approach to {SPDE}s}.
\newblock {\em Pages  185--242 of:} {\em {Stochastic partial differential
  equations: six perspectives}}.
\newblock {Math. Surveys Monogr.}, vol. 64.
\newblock {Amer. Math. Soc., Providence, RI}.

\bibitem[\protect\citename{Krylov, }2003]{Krylov2003Brownian}
{\sc Krylov, N.~V.} 2003.
\newblock Brownian Trajectory Is a Regular Lateral Boundary for the Heat
  Equation.
\newblock {\em SIAM Journal on Mathematical Analysis}, {\bf 34}(5), 1167--1182.

\bibitem[\protect\citename{Krylov, }2008]{Krylov2008Lectures}
{\sc Krylov, N.~V.} 2008.
\newblock {\em Lectures on Elliptic and Parabolic Equations in Sobolev Spaces}.
\newblock American Mathematical Society.

\bibitem[\protect\citename{Krylov, }2011]{Krylov2011On}
{\sc Krylov, N.~V.} 2011.
\newblock On the It{\^o}--Wentzell formula for distribution-valued processes
  and related topics.
\newblock {\em Probability Theory and Related Fields}, {\bf 150}(1-2),
  295--319.

\bibitem[\protect\citename{Krylov \& Lototsky, }1999]{krylov1999sobolev}
{\sc Krylov, N.~V., \& Lototsky, S.~V.} 1999.
\newblock A Sobolev space theory of SPDEs with constant coefficients on a half
  space.
\newblock {\em SIAM Journal on Mathematical Analysis}, {\bf 31}(1), 19--33.

\bibitem[\protect\citename{Krylov \& Rozovsky, }1981]{krylov1981stochastic}
{\sc Krylov, N.~V., \& Rozovsky, B.} 1981.
\newblock {Stochastic evolution equations}.
\newblock {\em Journal of Soviet Mathematics}, {\bf 16}(4), 1233--1277.

\bibitem[\protect\citename{Ladyzhenskaya {\em et~al.\ }\relax,
  }1988]{ladyzhenskaya1988linear}
{\sc Ladyzhenskaya, O.~A., Solonnikov, V.~A., \& Ural'tseva, N.~N.} 1988.
\newblock {\em Linear and quasi-linear equations of parabolic type}.
\newblock  Vol. 23.
\newblock American Mathematical Soc.

\bibitem[\protect\citename{Lieberman, }1996]{lieberman1996second}
{\sc Lieberman, G.~M.} 1996.
\newblock {\em {Second Order Parabolic Differential Equations}}.
\newblock  Vol. 68.
\newblock World Scientific.

\bibitem[\protect\citename{Lototsky, }2000]{Lototsky2000Sobolev}
{\sc Lototsky, S.~V.} 2000.
\newblock Sobolev spaces with weights in domains and boundary value problems
  for degenerate elliptic equations.
\newblock {\em Methods \& Applications of Analysis}, {\bf 7}(1), 195--204.

\bibitem[\protect\citename{Mikulevicius, }2000]{mikulevicius2000cauchy}
{\sc Mikulevicius, R.} 2000.
\newblock {On the {C}auchy problem for parabolic {SPDE}s in {H}\"older
  classes}.
\newblock {\em {Ann. Probab.}}, {\bf 28}(1), 74--103.

\bibitem[\protect\citename{Van~Neerven {\em et~al.\ }\relax,
  }2012a]{van2012maximal}
{\sc Van~Neerven, J., Veraar, M., \& Weis, L.} 2012a.
\newblock Maximal $L^p$-Regularity for Stochastic Evolution Equations.
\newblock {\em SIAM Journal on Mathematical Analysis}, {\bf 44}(3), 1372--1414.

\bibitem[\protect\citename{Van~Neerven {\em et~al.\ }\relax,
  }2012b]{van2012stochastic}
{\sc Van~Neerven, J., Veraar, M., \& Weis, L.} 2012b.
\newblock Stochastic maximal $L^p$-regularity.
\newblock {\em The Annals of Probability},  788--812.

\bibitem[\protect\citename{Walsh, }1986]{walsh1986introduction}
{\sc Walsh, J.~B.} 1986.
\newblock An introduction to stochastic partial differential equations.
\newblock {\em Pages  265--439 of:} {\em \'{E}cole d'\'et\'e de probabilit\'es
  de {S}aint-{F}lour, {XIV}---1984}.
\newblock {Lecture Notes in Math.}, vol. 1180.
\newblock {Springer, Berlin}.

\bibitem[\protect\citename{Zhang, }2006]{zhang2006lp}
{\sc Zhang, X.} 2006.
\newblock $L^p$-theory of semi-linear SPDEs on general measure spaces and
  applications.
\newblock {\em Journal of Functional Analysis}, {\bf 239}(1), 44--75.

\end{thebibliography}

\end{document}